\documentclass[11pt, a4paper]{article}
\usepackage{amsfonts}
\usepackage{amssymb}
\usepackage{amsmath}
\usepackage{amsthm}
\usepackage{mathabx}
\usepackage{epsfig,epic,eepic}

\usepackage[normalem]{ulem}

\setlength{\textwidth}{6.1in}
\setlength{\oddsidemargin}{.2 in}
\setlength{\textheight}{9in}
\setlength{\topmargin}{-.3in}





\newcommand{\R}{\mathbb R}

\newcommand{\Z}{\mathbb Z}

\newcommand{\hop}{\vskip .2cm\noindent}

\newcommand{\mrm}{\mathrm}

\newcommand{\ms}{^{-1}}
\newcommand{\alb}{{\alpha\beta}}
\newcommand{\bb}{\mathcal B}

\newtheorem{enonce}{}[section]

\newtheorem{theo}[enonce]{Theorem}
\newtheorem{cor}[enonce]{Corollary}
\newtheorem{prop}[enonce]{Proposition}
\newtheorem{lem}[enonce]{Lemma}

\newtheorem{fact}[enonce]{Fact}

\theoremstyle{definition}
\newtheorem{defi}[enonce]{Definition}
\newtheorem{nota}[enonce]{Notation}

\theoremstyle{remark}
\newtheorem{rema}[enonce]{Remark}

\newtheorem{examples}[enonce]{Examples}
\title{Conformal class of Lorentzian surfaces  with Killing fields.}
\author{Pierre Mounoud}
\date{}
\begin{document}
 \maketitle
\begin{abstract}
We study the conformal classes of $2$-dimensional Lorentzian tori with (non zero) Killing fields. We define a map that associate to such a class a vector field on the circle (up to a scalar factor). This map is not injective but has finite dimensional fiber.
It allows us to characterize the conformal classes of tori with Killing field satisfying a condition  related to the existence of conjugate points  given 
by L.\ Mehidi. 
\end{abstract}
\section{Introduction}
It is well known that there is no Uniformisation Theorem for Lorentzian surfaces, ie something that would say that the space of Lorentzian  conformal structures (ie metrics up to conformal factor and up to diffeomorphism)  on a given surface is, for example, finite dimensional.  Indeed, two Lorentzian metrics on a surface are conformal if and only if they have the same lightlike cones. On an orientable surface these cones (made of two lines) define two foliations and it is known that there is no reasonable moduli space of foliations on a surface. One of the goals of this article is to better understand the set of conformal structures of Lorentzian tori admitting a non zero Killing field (in what follows Killing field will always mean  non zero Killing field). The motivation is coming from the desire to find a conformal interpretation of a Theorem by L.\, Mehidi in \cite{LM} (cf. Theorem \ref{thM}) about Lorentzian tori without conjugate points.

It is not difficult to see that a torus $T$ with a Killing field $K$ is conformal to a flat metric (ie has linear lightlike foliations) if and only if all the orbits of $K$ have the same type (between spacelike, lightlike and timelike)  otherwise said  if the sign of $\langle K,K\rangle$ is  constant. Hence, we will assume now that the sign of  $\langle K,K\rangle$ is not constant. In such a case the flow of $K$ has to be periodic and the space of orbits of $K$ is a circle. 
In \cite{BetM}, Ch.\,Bavard and the author gave a natural parameterization of this circle and therefore it is possible to associate to $(T,K)$ a well defined smooth periodic real  function $f$ induced by $\langle K,K\rangle$.  
Contrarily to the Riemannian case, $\widetilde T$, the universal cover of $T$, is not determined by $f$. The function $f$ determines a bigger space $E_f$  whose construction is recalled in section \ref{sect_Ef}.  This space $E_f$ contains $\widetilde T$, otherwise said $T$ is locally modeled on $E_f$. In order to find  $\widetilde T$ in $E_f$ (ie to define the developping map) it is necessary to add a finite combinatorial data that precises to which lightlike foliation belong the  lightlike orbits of $K$.  
We denote by Per$(f)$ the set of non zero periods of $f$ and if $P\in \mathrm{Per}(f)$ we denote by  $X_{f,P}$ the vector field on $\R/P\Z$ induced by $f\partial_t$. 
In the same way as $T$ is almost determined by $f$, the conformal class of $T$ is almost determined by a vector field $X_{f,P}$. More precisely:
\begin{theo}\label{cordestores}
Let $f$ and $g$ be two smooth periodic functions of non constant sign. Let $T$ be a torus locally modeled on $E_f$.
There exists a torus locally modeled on $E_g$ that is  conformal to a finite cover of $T$  if and only if there exists  $P\in \mathrm{Per}(f), Q\in \mathrm{Per}(g)$ and $a\neq 0$  such that $X_{f,P}$ is diffeomorphic $aX_{g,Q}$.
\end{theo}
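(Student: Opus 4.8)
The plan is to transport the whole problem to the model spaces $E_f$ and $E_g$ and to classify the conformal maps between them. Recall from Section~\ref{sect_Ef} that a torus $T$ locally modeled on $E_f$ carries a developing map $D\colon\widetilde T\to E_f$, a conformal diffeomorphism onto its image, equivariant for the holonomy morphism $\rho\colon\pi_1(T)\to\mathrm{Conf}(E_f)$; the image $D(\widetilde T)$ and the morphism $\rho$ are determined by $f$ together with the finite combinatorial datum recording to which lightlike foliation each lightlike $K$-orbit belongs. On the dense open set $\{f\ne 0\}$ the conformal structure of $E_f$ is represented, in the coordinates $(s,t)$ furnished by the construction (so that $K=\partial_s$ and the orbit circle is the $t$-line), by the metric $2\,ds\,dt+f(t)\,ds^2$; its two lightlike foliations are the levels of $s$ and the levels of $s+2\int^{t}d\tau/f(\tau)$, and the $K$-orbits are the curves $\{t=\mathrm{const}\}$. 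Finally $T$ singles out a period $P_T\in\mathrm{Per}(f)$, that of its orbit circle, and its finite covers have orbit circles of period the multiples $nP_T$.

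For the implication $(\Leftarrow)$, I would start from a diffeomorphism $h\colon\R/P\Z\to\R/Q\Z$ with $h_\ast X_{f,P}=aX_{g,Q}$ and lift it to a diffeomorphism $\widetilde h\colon\R\to\R$; the conjugacy relation is precisely the ODE $f(t)\,\widetilde h'(t)=a\,g(\widetilde h(t))$. I would then set $\Phi(s,t)=(as,\widetilde h(t))$. A direct computation using the ODE gives $\Phi^\ast\big(2\,d\bar s\,d\bar t+g(\bar t)\,d\bar s^2\big)=a\widetilde h'(t)\big(2\,ds\,dt+f(t)\,ds^2\big)$, so over $\{f\ne 0\}$ the map $\Phi$ is conformal, sends the two lightlike foliations of $E_f$ to those of $E_g$, and pushes $K$ to $a$ times the Killing field of $E_g$. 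The real point is that $\Phi$ extends to a conformal diffeomorphism of all of $E_f$ onto $E_g$ (onto suitable finite covers when the combinatorial data do not already match): near a lightlike $K$-orbit the local model of $E_f$ is governed, by Section~\ref{sect_Ef}, by the germ of $f$ at the corresponding zero $t_0$, while the conjugacy forces $\widetilde h(t_0)$ to be a zero of $g$ with $f'(t_0)=a\,g'(\widetilde h(t_0))$ and, more generally, with all the jet data entering that local model matched; hence $\Phi$ glues across the lightlike orbits. To finish I would conjugate the holonomy of $T$ by $\Phi$: after passing to the finite cover of $T$ whose orbit circle has period a common multiple $P'$ of $P$ and $P_T$ — which is harmless, the hypothesis being inherited by covers — the image of $\rho$ becomes a lattice in $\mathrm{Conf}(E_g)$ whose translation part along the orbit circle is $Q'\Z$ (since $\widetilde h(t+P')=\widetilde h(t)\pm Q'$), so $\Phi\circ D$ realises this cover as a torus locally modeled on $E_g$, visibly conformal to it.

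For $(\Rightarrow)$, suppose a torus $T'$ locally modeled on $E_g$ is conformal to a finite cover $\widehat T$ of $T$, via a conformal diffeomorphism $\Psi$; note that $\widehat T$ is again locally modeled on $E_f$. The crucial step is that $\Psi$ carries the foliation by $K$-orbits of $\widehat T$ to the foliation by $K'$-orbits of $T'$, for which I would invoke the description of conformal automorphisms from Section~\ref{sect_Ef}: the $K$-orbit foliation is conformally canonical, because its leaves are characterised, even across the lightlike orbits, as those realising the distinguished affine identification between the leaf spaces of the two lightlike foliations. Granting this, $\Psi$ descends to a diffeomorphism $\bar\Psi$ of the orbit circles which, lifted to $\R$, must send both the levels of $s$ and the levels of $s+2\int^{t}d\tau/f(\tau)$ to the corresponding foliations of $E_g$; reading off these two requirements exactly as in $(\Leftarrow)$ forces $\bar\Psi$ to satisfy $f\cdot\bar\Psi'=a\,(g\circ\bar\Psi)$ for some constant $a\ne 0$, i.e.\ $\bar\Psi_\ast X_{f,P}=aX_{g,Q}$, where $P$ and $Q$ are the periods of $f$ and $g$ read off from the lattices of $\widehat T$ and $T'$, and these lie in $\mathrm{Per}(f)$ and $\mathrm{Per}(g)$.

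I expect the main obstacle to be the behaviour at the lightlike $K$-orbits: in $(\Leftarrow)$, checking that conjugacy of the two circle vector fields supplies exactly the jet data needed to glue $\Phi$ across them, and, dually, in $(\Rightarrow)$, that the Killing foliation is conformally canonical in the model — both relying on the explicit local structure of $E_f$ near a lightlike orbit from Section~\ref{sect_Ef}. The remaining ingredients (the ODE for $\widetilde h$, the two foliation identities, and the bookkeeping of periods and finite covers) are routine.
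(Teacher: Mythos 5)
Your backward implication follows the paper's route: build the map ribbon-by-ribbon from the lifted conjugacy (Proposition \ref{prop_rib}), extend it across the lightlike orbits, and push the holonomy through after passing to a suitable finite cover. One caveat: do not try to glue $\Phi$ across a lightlike orbit by ``matching all the jet data'' of $f$ and $g$ at corresponding zeros. A conjugacy of hyperbolic vector fields only forces $f'(t_0)=a\,g'(\widetilde h(t_0))$; higher jets need not match, and if your gluing really required them the argument would fail. The paper avoids this entirely: Proposition \ref{prop_confsad} shows that \emph{every} $K$-conformal diffeomorphism between simple dominos extends to the saddle extensions, with no condition on jets, because both dominos embed $K$-conformally into the flat saddle after linearizing $f\partial_t$ and $g\partial_t$ (Proposition \ref{prop_ext}). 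You should route your extension step through that statement rather than through jet-matching.

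The forward implication has a genuine gap. You reduce everything to the claim that the foliation by $K$-orbits is ``conformally canonical'', characterised as ``realising the distinguished affine identification between the leaf spaces of the two lightlike foliations''. No such distinguished identification is exhibited, and I do not see one: a $K$-orbit does give a local identification of the two leaf spaces, but nothing in the conformal structure singles it out, and a conformal diffeomorphism between two such tori has no a priori reason to match the Killing directions. This is exactly what Proposition \ref{prop>K} is for, and its proof is not soft: one pulls back the Killing circle action of $T'$ to get a second $S^1$-action preserving the lightlike foliations, passes to the induced $\R$-actions on the (possibly non-Hausdorff) leaf spaces $\mathcal L$ and $\mathcal L'$ of the lifted foliations on the holonomy cover, and invokes Szekeres' theorem to show that on each half-line between fixed points the flow is determined by the single diffeomorphism $\gamma$ it contains --- with a separate argument when one lightlike foliation is linear. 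Without this input (or an equivalent rigidity statement) you only obtain the conclusion for $K$-conformal equivalences, which is Proposition \ref{prop_TT'}, whereas the theorem assumes only a conformal one. Finally, in your descent to the orbit circle you should also verify that the ribbon-wise conjugacies $\varphi_i$ agree on the strips where consecutive ribbons of the holonomy cover are glued by generic reflections; this is the small computation with $\sigma'_i\circ\sigma''_i$ in the proof of Proposition \ref{prop_TT'}.
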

Moreover, see Remark \ref{rem_u}, two conformal tori locally modeled on the same space $E_f$ are isometric. As, by \cite[Proposition 5.2]{BetM}, the set of tori locally modeled on $E_f$ is finite dimensional, we see that  the fibers of the map that sends the conformal class of a torus $T$  locally modeled on $E_f$ on the  vector fields $\R.X_{f,P}$, with $P$ the period of $f$ deduced from $T$, are finite dimensional.

This theorem, together with Mehidi's Theorem from \cite{LM}  mentioned above and the classification of hyperbolic vector fields on the circle,  implies the following.
\begin{cor}
Let $T$ be a non flat Lorentzian torus with a Killing field. If $T$ is without conjugate point and if the closed lightlike geodesics of $T$ are incomplete then there exists a unique $b\in ]-1,1[$ such that $T$ is in the conformal class of a Reeb torus locally modeled on $E_{sin(y)(1+b\sin(y))}$.
\end{cor}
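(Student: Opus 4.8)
The plan is to translate the statement about the torus $T$ into a statement about a vector field on the circle, use Mehidi's Theorem~\ref{thM} to pin down the dynamical type of that vector field, and then read off the conformal class from Theorem~\ref{cordestores}. First, since $T$ is not flat, the sign of $\langle K,K\rangle$ is not constant, so by the discussion in the introduction the flow of $K$ is periodic, the orbit space is a circle carrying a smooth function $f$ of non-constant sign, and $T$ is locally modeled on $E_f$. Let $P\in\mathrm{Per}(f)$ be the period deduced from $T$ and put $X=X_{f,P}=f\partial_t$ on $\R/P\Z$. The closed lightlike geodesics of $T$ are precisely the orbits of $K$ through the zeros of $f$, and the computation of the affine parameter along such an orbit (as in \cite{BetM}) shows that such a geodesic is incomplete exactly when the corresponding zero of $f$ is simple; hence the hypothesis on the closed lightlike geodesics says that every zero of $f$ is non-degenerate, i.e.\ $X$ is a \emph{hyperbolic} vector field on the circle.

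Next I would feed the no-conjugate-point hypothesis into Theorem~\ref{thM}. The expected outcome is that, among hyperbolic circle vector fields, only the \emph{Reeb type} survives, and moreover with a balance condition on the two zeros: $X$ has exactly two zeros, one repelling and one attracting, and the two eigenvalues $f'(\text{zero})$ are opposite (equal in modulus). Concretely one has to check that a hyperbolic $f\partial_t$ with more than two zeros, or with unequal eigenvalue moduli, cannot come from a torus without conjugate points; this is where Theorem~\ref{thM} enters, together with the fact that the developing map of $T$ must close up on a torus. I expect this translation to be the first of the two delicate points.

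By the classification of hyperbolic vector fields on the circle, a Reeb-type field with opposite eigenvalues at its two zeros is determined, up to diffeomorphism and scaling, by a single real modulus, obtained by comparing, along the two connecting arcs, the linearizing charts at the source and at the sink. The claim is that the family $X_{f_b,2\pi}$ with $f_b(y)=\sin y\,(1+b\sin y)$, $b\in\,]-1,1[\,$, realizes this parametrization bijectively: for $|b|<1$ one has $1+b\sin y>0$, so $f_b$ has exactly the two simple zeros $0$ and $\pi$ with $f_b'(0)=1=-f_b'(\pi)$, so $X_{f_b,2\pi}$ is of the required type; and a direct computation of the modulus of $X_{f_b,2\pi}$ (integrating $1/f_b$ via the partial-fraction expansion $\tfrac{1}{\sin y(1+b\sin y)}=\tfrac{1}{\sin y}-\tfrac{b}{1+b\sin y}$) should exhibit it as a strictly monotone function of $b$ sweeping out all admissible values. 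Granting this, Theorem~\ref{cordestores} applied to $X=X_{f,P}$ and to $X_{f_b,2\pi}$ shows that $T$ is conformal to a (finite cover of a) torus locally modeled on $E_{f_b}$, for the value of $b$ matching the modulus of $X$. Among the finitely many tori locally modeled on $E_{f_b}$, one is distinguished by the combinatorial data making it a Reeb torus, and by Remark~\ref{rem_u} two conformal tori modeled on the same $E_{f_b}$ are isometric, so ``the Reeb torus on $E_{f_b}$'' is unambiguous; tracking which cover appears in Theorem~\ref{cordestores} gives that $T$ itself lies in that conformal class. Uniqueness of $b$ then follows from the injectivity of $b\mapsto X_{f_b,2\pi}$, once the apparent symmetry $b\mapsto-b$ of the underlying circle vector fields is accounted for by the orientation of the orbit circle and by the combinatorial data of the Reeb torus.

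The two steps that I expect to be the real work are: (i) extracting from Theorem~\ref{thM} the precise dynamical description used above (hyperbolic, two zeros, opposite eigenvalues), and (ii) the explicit computation of the modulus of $X_{f_b,2\pi}$ together with the verification that $b\mapsto$ modulus is a bijection from $]-1,1[\,$ onto the set of admissible values; everything else is a formal assembly of Theorems~\ref{cordestores} and \ref{thM}, Remark~\ref{rem_u}, and the classification of hyperbolic circle vector fields.
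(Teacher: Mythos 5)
Your skeleton matches the paper's: use Mehidi's Theorem~\ref{thM} to constrain the vector field on the orbit circle, the classification of hyperbolic circle vector fields to match it with the family $f_b(y)=\sin(y)(1+b\sin(y))$, and Theorem~\ref{cordestores} plus Remark~\ref{rem_u} to convert that matching into a conformal statement. But your step (i) asserts something Theorem~\ref{thM} does not give, and which is in fact false: that $X=X_{f,P}$ has exactly two zeros. Mehidi's Theorem yields that $T$ is Reeb --- a statement about the lightlike foliations being unions of Reeb components, i.e.\ about the combinatorial gluing data of the dominos, not about the number of zeros of $f$ --- and that $f$ satisfies Mehidi's condition, namely that \emph{all} zeros of $f$ share the same value of $|f'|$. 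The period $P$ deduced from $T$ is the length of the orbit circle, and $f$ may have $2k$ zeros on $[0,P[$ for any $k\geq 1$; already the Clifton--Pohl torus $T_{CP}$ has four lightlike $K$-orbits (the four half-axes), so the flagship example has $k=2$ and escapes your analysis. The paper repairs this by matching $X_{f,P}$ with $\lambda X_{f_b,2k\pi}$, the field induced by $f_b$ on the circle of length $2k\pi$ (hence with $2k$ zeros), and by invoking the full Hitchin--Bykov statement (Theorem~\ref{HB}): the complete list of invariants is the number of zeros, the derivatives at the zeros (all $\pm\lambda$ here, matching $\lambda f_b$), and the single residual invariant $\mu$, which Bykov shows sweeps all of $\R$ as $b$ runs through $]-1,1[$. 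Your ``modulus from linearizing charts'' is this $\mu$, and your partial-fraction computation is the right one (it gives $\mu(X_{f_b,2\pi})=-2\pi b/\sqrt{1-b^2}$), but you also need completeness of the invariant list for arbitrary $2k$, which is exactly Theorem~\ref{HB} and should be cited rather than re-derived for the two-zero case only.

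Two smaller gaps. First, uniqueness of $b$: the symmetry you flag is real, since $f_{-b}(y+\pi)=-f_b(y)$, so allowing $a<0$ in Theorem~\ref{cordestores} exchanges $b$ and $-b$; a one-line appeal to ``orientation and combinatorial data'' does not settle it, and the paper's route is through Proposition~\ref{prop>K} (conformal implies $K$-conformal for non conformally flat tori) and Proposition~\ref{prop_TT'}, which force equality of the invariants of the circle fields. Second, Theorem~\ref{cordestores} literally produces a torus conformal to a \emph{finite cover} of $T$; passing to $T$ itself requires the observation that $P$ is the period deduced from $T$ so that the holonomy already lands in the relevant subgroup, which you leave as ``tracking which cover appears.'' Uniqueness of the Reeb torus within a given $E_{f_b}$ is Remark~\ref{rem_u}, which you cite correctly.
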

Note that a Lorentzian torus is Reeb if its lightlike foliations are unions of Reeb components. A Reeb torus locally modeled on a space $E_f$ is determined by the choice of a period of $f$, of the length of the orbits of the Killing field and of a twist parameter, see \cite{BetM} for details.
The Clifton-Pohl tori correspond to the case $b=0$. When $|b|<1/8$, it follows from \cite{LM} that Reeb tori locally modeled on $E_{sin(y)(1+b\sin(y))}$ do not have conjugate points. The author does not know if the other conformal classes actually contains  metrics without conjugate points.
%
\hop
{\bf Acknowledgment} The author thanks  Nicolas Gourmelon for helpfull discussions.
\section{Conformal geometry of  ribbons}\label{sect_ribb}
Let $I$ be an (open) interval and $f:I\rightarrow \R$ be a 
 smooth function with non constant sign. A point  $x\in I$ such that  $f(x)=0$ and $f'(x)\neq 0$ will be called a simple zero of $f$.

 We  call the surface $\R \times I$ endowed with the Lorentzian metric $ f(y)dx^2+2 dxdy$ the \emph{Ribbon associated to $f$} and denote it $R_f$. The vector field $\partial_x$ is clearly a Killing field of $R_f$, also denoted~$K_f$. 
\begin{rema}
Let $\Sigma$ be a Lorentzian surface and if $K$ be a (non zero) Killing field of $\Sigma$. Any $p\in \Sigma$ such that $K(p)\neq0$ has a  neighborhood isometric to a ribbon. Indeed, let $\gamma$ be a (maximal) lightlike geodesic transverse to $K$ and containing  $p$, its $K$-saturation, ie its image by the flow of $K$, is a ribbon (because the map $t\mapsto \langle \gamma'(t),K(\gamma(t))\rangle$ is constant). The map $f$ is given by $t\mapsto \langle K(\gamma(t)),K(\gamma(t))\rangle$). It follows that $\Sigma$ can be covered by ribbons and neighborhoods of saddle points.
\end{rema}
\begin{defi}
Let $\{I_\alpha,\alpha\in \bb\}$ be the set of connected component of $I\smallsetminus f\ms(0)$. The order of $\R$ induces an order denoted $\prec$ on $\bb$ (more precisely $\alpha\prec \beta$ if $\forall (y,y') \in I_\alpha\times I_\beta$ we have $y<y'$).
\begin{enumerate}
\item For any $\alpha\in \bb$, the submanifold $B_\alpha=\R\times  I_\alpha$ of $R_f$ is called an open strip of $R_f$. 
\item If $J$ is a maximal subinterval of $I$ containing  a unique (simple) zero of $f$  then $\R\times  J$  is called a (simple) domino of $R_f$. 
\end{enumerate}
\end{defi}
On each interval $I_\alpha$ we choose a primitive $F_\alpha:I_\alpha\rightarrow \R$ of $-\frac1f$. If $\alpha$ is not the smallest or biggest element of $\bb$ then $F_\alpha$ is  clearly a diffeomorphism. 
We easily verify that for any $\alpha\in \bb$, the curve $\gamma_\alpha: t\mapsto  (F_\alpha (t),t)$ is a (pre)geodesic perpendicular to $K$ contained in $B_\alpha$. 

The  orthogonal reflection relatively to $\gamma_\alpha$, ie the map $\sigma_\alpha: B_\alpha \rightarrow B_\alpha, (x,y)\mapsto (2F_\alpha(y)-x,y)$, is an isometry that does not extend to $R_f$.  We  will call it the  \emph{generic reflection of $R_f$ with axis $\gamma_\alpha$.}  It sends the lightlike geodesic $\{x=0\}$ on  the curve parameterized by $t\mapsto  (2 F_\alpha (t),t)$ which is therefore a lightlike geodesic.

In \cite{BetM} the generic reflections are used to extend $R_f$. Indeed, if we glue two copies of $R_f$ along a strip $B_\alpha$ thanks to a generic reflection $\sigma_\alpha$ we obtain a new Lorentzian surface.
\begin{defi}
The vertical  foliation of $R_f$ is the lightlike foliation whose leaves are the lines $\{x=c; c\in \R\}$. 
\\
The horizontal  foliation of $R_f$ is the foliation whose leaves are the lines $\{y=c; c\in f\ms(0)\}$ and the curves parameterized by $I\rightarrow \R_f, t\mapsto  (2F_\alpha(t)+x,t)$, for any $\alpha\in \bb$ and $x\in \R$.
\end{defi}
\begin{defi}
Two Lorentzian surfaces with a Killing field $S_1$ and $S_2$ are $K$-conformal if there exists  a conformal diffeomorphism between them that sends Killing fields of $S_1$ on Killing fields of $S_2$.  
\end{defi}
\begin{rema} It is not difficult to see that a "$K$-isometry" between ribbons must read $(x,y)\mapsto (ax+t_0, y/a+b)$. Consequently, two ribbons $R_f$ and $R_g$ are $K$-isometric if and only if  there exists $a\neq 0, b\in \R$ such that $g(y)=a^2f(y/a+b)$.  Following \cite{BetM}, we say that $f$ and $g$ are in the same class and write  $[[f]]=[[g]]$.
\end{rema}
Recall that a diffeomorphism between orientable Lorentzian surfaces is conformal if and only if it sends the lightlike foliations of the first on the lightlike foliations of the second.
\begin{prop}\label{prop_rib}
Let $f:I\rightarrow \R$ and $g:J\rightarrow \R$ be two smooth maps and let $R_f$ and $R_g$ be the associated ribbons. 
A map  $\Psi_{f,g}: R_f\rightarrow R_g$ is a $K$-conformal diffeomorphism if and only if  $\Psi_{f,g} (x,y)=(a x+t_0,\varphi(y))$ where $t_0\in \R$, $a\in \R^*$   and $\varphi:I\rightarrow J$ is a diffeomorphism such that $\varphi_*\,f(t)\partial_t=ag(t)\partial_t$.
\end{prop}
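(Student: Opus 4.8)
The plan is to treat the two implications separately. Assume first that $\Psi_{f,g}(x,y)=(ax+t_0,\varphi(y))$ with $\varphi_*\,f(t)\partial_t=ag(t)\partial_t$, i.e. $f(y)\varphi'(y)=ag(\varphi(y))$. I would pull the metric $g(y')dx'^2+2dx'dy'$ of $R_g$ back along $\Psi_{f,g}$, obtaining $a^2g(\varphi(y))\,dx^2+2a\varphi'(y)\,dxdy$; substituting the relation identifies this with $a\varphi'(y)\bigl(f(y)dx^2+2dxdy\bigr)$, and the factor $a\varphi'(y)$ is nowhere zero since $\varphi$ is a diffeomorphism, so $\Psi_{f,g}$ is conformal. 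As moreover $(\Psi_{f,g})_*K_f=a\partial_{x'}=aK_g$ is a Killing field of $R_g$, the map $\Psi_{f,g}$ is $K$-conformal.

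For the direct implication, let $\Psi=\Psi_{f,g}$ be $K$-conformal, so that $\widetilde K:=\Psi_*K_f$ is a Killing field of $R_g$. The crucial preliminary point is that $\widetilde K=aK_g$ for some $a\in\R^*$: because $g$ has non-constant sign, the Killing fields of $R_g$ — or at least its complete ones, and $\widetilde K$ is complete, being the image of the complete field $K_f$ under a diffeomorphism — are precisely the scalar multiples of $K_g$; this can be read off the Killing equations of $R_g$ (compare the Remark preceding the statement; see also \cite{BetM}). Granting this, $\Psi$ conjugates the flow of $K_f$ to that of $aK_g$; the orbits of $K_f$ and of $K_g$ being respectively the lines $\{y=\mathrm{const}\}$ and $\{y'=\mathrm{const}\}$, it follows that the second coordinate of $\Psi$ depends on $y$ alone, $\Psi(x,y)=(u(x,y),\varphi(y))$ with $\varphi\colon I\to J$ a diffeomorphism. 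Comparing $\Psi_*\partial_x=u_x\,\partial_{x'}$ with $a\partial_{x'}$ forces $u_x\equiv a$, whence $\Psi(x,y)=(ax+\psi(y),\varphi(y))$ for some smooth $\psi$ on $I$.

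It then remains to use conformality to pin down $\psi$ and $\varphi$. Pulling $g(y')dx'^2+2dx'dy'$ back along $(x,y)\mapsto(ax+\psi(y),\varphi(y))$, the coefficient of $dy^2$ is $\psi'(y)\bigl(g(\varphi(y))\psi'(y)+2\varphi'(y)\bigr)$, which must vanish identically since the metric of $R_f$ has no $dy^2$-term. This is where non-constant sign is used decisively: at any $y_0$ with $g(\varphi(y_0))=0$ — and such $y_0$ exist, $\varphi$ being onto $J$ — one gets $\psi'(y_0)\varphi'(y_0)=0$, hence $\psi'(y_0)=0$; on each component of $I\smallsetminus(g\circ\varphi)^{-1}(0)$ one has $\psi'\equiv 0$ or $\psi'\equiv-2\varphi'/(g\circ\varphi)$, and a short connectedness argument (using that $\psi'$ is bounded near a zero of $g\circ\varphi$ whereas $-2\varphi'/(g\circ\varphi)$ is not) rules out the second alternative, so $\psi'\equiv 0$ and $\psi\equiv t_0$. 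Comparing finally the remaining coefficients, $a^2g(\varphi(y))$ for $dx^2$ and $2a\varphi'(y)$ for $dxdy$, with $\mu f(y)$ and $2\mu$, gives $\mu=a\varphi'(y)$ and $f(y)\varphi'(y)=ag(\varphi(y))$, that is, $\varphi_*\,f(t)\partial_t=ag(t)\partial_t$.

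The main obstacle is the preliminary identification $\widetilde K=aK_g$; once it is available the remainder is routine manipulation of the two explicit metrics. A secondary subtle point is the vanishing of the shear $\psi'$: this is the global counterpart of the fact that a generic reflection $\sigma_\alpha$, which does interchange the two lightlike foliations of a single strip $B_\alpha$, does not extend across a zero of $f$ to the whole ribbon — it is exactly the presence of zeros of $g$ (non-constant sign) that excludes such a half-turn globally.
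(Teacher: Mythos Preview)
Your proof is correct, but the paper's argument is shorter and more geometric. Where you first derive the form $\Psi(x,y)=(ax+\psi(y),\varphi(y))$ from $K$-equivariance and then eliminate $\psi$ by analyzing the $dy^2$-coefficient of the pulled-back metric near zeros of $g\circ\varphi$, the paper instead observes at the outset that $\Psi$ must carry the \emph{vertical} lightlike foliation of $R_f$ to that of $R_g$: the two lightlike foliations of a ribbon are conformally distinguished because the horizontal one contains orbits of $K$ (the lines $\{y=c\}$ with $f(c)=0$, which exist by non-constant sign) while the vertical one is everywhere transverse to $K$. This gives $\psi\equiv t_0$ in one stroke. For the relation on $\varphi$ the paper then invokes a second invariant foliation, the one orthogonal to $K$ (spanned by $\partial_x-f(y)\partial_y$), and pushes it forward to read off $\varphi'(y)f(y)=ag(\varphi(y))$ directly. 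Your coefficient-matching approach recovers the same information and is self-contained; the paper's route explains more transparently \emph{why} non-constant sign is the right hypothesis (it is exactly what makes the two lightlike foliations intrinsically distinguishable) and bypasses the connectedness/blow-up argument for $\psi'$. As for your flagged concern about $\widetilde K=aK_g$: the paper simply asserts ``$\partial_x$ on $a\partial_x$'' without further comment, so you are not missing a slicker justification there.
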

\begin{proof}
If $\Psi_{f,g}$ is $K$-conformal it sends the vertical foliation of $R_f$ on the vertical foliation of $R_g$ and $\partial_x$ on $a\partial_x$ for some $a\neq 0$. Therefore it must read $(x,t)\mapsto (ax+t_0, \varphi(y))$ for some $t_0\in \R$ and some diffeomorphism $\varphi$. But it also preserves the  foliations orthogonal to the Killing fields. Their tangent spaces are respectively spaned by $\partial_x-f(y)\partial_y$ and $\partial_x-g(y)\partial_y$, therefore $d_{(x,y)}\Psi_{f,g}(\partial_x-f(y)\partial_y)=(a\partial_x-\varphi'(y)f(y)\partial_y=a(\partial_x-g(\varphi(y))\partial_y)$ ie $\varphi_*f\partial_y=g\partial_y$.
The reciprocal is  clear.
\end{proof}
Abusing notations we will denote by $\Psi_{f,g}^*R_g$ the surface $\R\times I$ endowed with  the pull-back of the metric of $R_g$. Note that, $\Psi_{f,g}^*(g(y)dx^2+ 2dxdy)=a\varphi'(y)(f(y)dx^2+2dxdy)$. In particular, if the vector field $f\partial_t$ is complete, for any $t\in \R$ we can consider $\varphi_t$ its flow at time~$t$  and  define  the map $\Psi_{f,f}:(x,y)\mapsto (x,\varphi_t(y))$ that is a $K$-conformal diffeomorphism of~$R_f$.  

Note that the foliations perpendicular to $\partial_x$ are the same for both metrics and therefore the generic reflections of $R_f$ are the generic reflections of $\Psi_{f,g}^*R_g$.  

\section{Conformal geometry of  saddles} \label{sect_saddles}

Let $J$ be an interval containing $0$ and $\theta:J\rightarrow \R_+^*$  be a smooth function. The surface  $\{(u,v)\in \R^2\,;\, uv\in J\}$ endowed with the metric $2\theta(uv) du dv$ is called the symmetric saddle\footnote{The definition of symmetric saddle given in \cite{BetM} is a priori more general but it follows from Proposition  \ref{prop_ext} that these definitions are  in fact equivalent} associated to $\theta$ and is denoted $S_\theta$. The vector field  $u\partial_u- v\partial_v$. is a Killing field of $S_\theta$, we will denote it by $K_\theta$. The geodesics perpendicular to $K_\theta$ are the radial lines, they all meet at the origin,  and the generic reflections they define are global isometries of $S_\theta$ (they read $(u,v)\mapsto (\pm e^tv, \pm e^{-t}u)$ where $t$ is a real parameter).

If $\gamma$ is a horizontal or vertical line that does not contain $0$ then its $K_\theta$-saturation, ie $\bigcup_{t\in\R}\Phi^t_K(\gamma)$ where $\Phi_K$ is the flow of $K$, is a half-plane that we will call a half-saddle. Consequently, see \cite[Lemme 2.3]{BetM}, each of the four half-saddles is isometric to a domino and $S_\theta\smallsetminus\{0\}$ is the union of 4 dominos. The generic reflections permute the dominos which are therefore all  isometric to a given simple domino $D_f$. More precisely, if $\gamma: t\mapsto (u(t),1)$ is the geodesic starting from $(0,1)$ such that $\langle \gamma'(t),K_\theta(\gamma(t)\rangle\equiv 1$, ie $u$ is the solution of  $u'=-\frac 1{\theta(u)}$, $u(0)=0$,   then we can take  $f=-2u(\theta\circ u)$ (we have  normalized the choice of $f\in[[f]]$ by asking  that $f(0)=0$ and $f'(0)=2$). 

The following result is already contained is \cite{BetM}. The following proof of it is simpler than the original one but chiefly it will allow us to prove Proposition \ref{prop_confsad} which  is the goal  of this section. 
\begin{prop}\label{prop_ext}
Any simple domino $D_f$ can be  isometrically embedded as a half-sadlle in a unique  symmetric saddle $S_{\theta_f}$. Moreover, this embedding is unique up to left composition by isometry of $S_{\theta_f}$.
\end{prop}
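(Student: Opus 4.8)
The plan is to recover $\theta_f$ explicitly from $f$ by inverting the relations $f=-2u\,(\theta\circ u)$, $u'=-1/(\theta\circ u)$, $u(0)=0$ recalled above, and then to produce the embedding by a short computation with the metric; the only genuine work is a local analysis at the zero of $f$.

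First I would normalize: replacing $f$ by an equivalent function in $[[f]]$ (that is, composing $D_f$ with a $K$-isometry), I may assume the unique simple zero of $f$ is $0$ and that $f'(0)=2$. Eliminating $\theta$ from the two relations above, the radial coordinate $u=u(y)$ of the hoped-for embedding must satisfy the \emph{linear} equation $u'=2u/f$ with $u(0)=0$. Near $0$ the function $\frac{2}{f}-\frac1y$ is smooth -- this is exactly where the hypothesis $f'(0)\neq0$ is used -- so on each of the two sides of $0$ the solutions have the form $C\,y\,\exp(\text{smooth})$; matching the value at $0$ and checking that the two one-sided Taylor expansions coincide, I obtain a single smooth $u\colon J\to\R$ with $u(0)=0$, $u'(0)\neq0$, vanishing to order exactly one at $0$, hence a diffeomorphism onto an interval $J_{\theta_f}\ni0$. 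I then \emph{define} $\theta_f\colon J_{\theta_f}\to\R$ by $\theta_f(u(y))=f(y)/(2|u(y)|)$; it is smooth and positive precisely because $f$ and $u$ both vanish to order exactly one at $0$, and by construction $f=-2u\,(\theta_f\circ u)$ and $u'=-1/(\theta_f\circ u)$. Finally, pulling back the saddle metric $2\theta_f(uv)\,du\,dv$ by $(x,y)\mapsto(e^{x}u(y),e^{-x})$ gives, just as in the discussion preceding this proposition, the metric $f(y)\,dx^2+2\,dx\,dy$ with $\partial_x\mapsto K_{\theta_f}$; so this map is a $K$-isometry of $D_f$ onto the half-saddle $\{v>0\}$ of $S_{\theta_f}$, whose image contains the leaf $\{u=0,\ v>0\}$, a simple zero-line of $\langle K_{\theta_f},K_{\theta_f}\rangle$, confirming that it is a half-saddle.

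For uniqueness: the only freedom in the construction is the scalar $C$, i.e.\ the value $\theta_f(0)$, and changing it replaces $\theta_f$ by $\lambda^{-1}\theta_f(\cdot/\lambda)$, whose saddle is isometric to $S_{\theta_f}$ via $(u,v)\mapsto(\lambda u,v)$; hence $S_{\theta_f}$ is well defined up to isometry (and literally unique if one also imposes $\theta_f(0)=1$). Conversely, if $D_f$ embeds isometrically as a half-saddle in a symmetric saddle $S_\theta$, the embedding can be taken to respect the Killing fields (this is automatic when $D_f$ has non-constant curvature, the remaining cases being direct), and composing with a generic reflection -- these act transitively on the four half-saddles -- I may assume the image is $\{v>0\}$; writing $\{v>0\}$ as a ribbon and applying Proposition~\ref{prop_rib} in its isometric form then forces $[[-2\tilde u\,(\theta\circ\tilde u)]]=[[f]]$, so $\theta$ agrees with $\theta_f$ up to the rescaling above and $S_\theta$ is isometric to $S_{\theta_f}$. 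Finally, given two isometric embeddings of $D_f$ onto half-saddles, a global isometry carries them onto a common half-saddle $H$, after which they differ by an isometry of $H\cong D_f$; by the classification of $K$-isometries of ribbons such an isometry is a translation along $K_f$ (the restriction of a flow of $K_{\theta_f}$) composed with a self-equivalence of $f$, and every self-equivalence of $f$ corresponds to a scaling symmetry of $\theta_f$, hence extends to a global isometry of $S_{\theta_f}$; this yields the stated uniqueness up to left composition by an isometry.

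The step I expect to be the main obstacle is the smoothness and positivity of $\theta_f$ at the origin, i.e.\ that the data coming from the two strips $B_+$ and $B_-$ glue to a $C^\infty$ function across $u=0$. I would handle it by a careful local study of $u'=2u/f$ at the simple zero: it is a non-resonant regular singular equation there, so it has a one-dimensional space of solutions that are smooth and vanish to order exactly one, and once the linear coefficients of the two one-sided solutions are matched their full Taylor expansions agree; feeding this into $\theta_f=f/(2|u|)$ gives the desired smooth positive extension. Everything else is either routine computation or an appeal to the properties of symmetric saddles recalled above.
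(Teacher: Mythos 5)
Your argument is correct and reaches the paper's conclusion, but by a more explicit route. The paper first writes down the embedding $(x,y)\mapsto\bigl(ye^{\lambda x/2},e^{-\lambda x/2}\bigr)$ for the linear domino $f_0(y)=\lambda y$, and then reduces the general case to it by invoking the linearization theorem for hyperbolic vector fields on the line (\cite[Theorem 2.16]{BelTrak}); the linearizing diffeomorphism, fed into Proposition~\ref{prop_rib}, produces a conformal factor $\zeta$ and one sets $\theta_f=-\zeta/\lambda$. Your direct integration of $u'=2u/f$ across the simple zero is precisely a hands-on proof of that linearization statement (the linearizing map satisfies the same regular-singular linear ODE $\varphi'=\lambda\varphi/f$), so the two constructions yield the same saddle; yours is self-contained where the paper outsources the one analytic point to a citation, at the cost of redoing a standard lemma. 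For uniqueness, the paper quotes \cite[Proposition 2.6]{BetM} (every isometry of a half-saddle comes from the flow of $K$), whereas you go through the classification of $K$-isometries of ribbons together with the observation that an isometric embedding must respect the Killing field once the curvature is non-constant; both work, and your explicit acknowledgment that an isometric embedding is not a priori a $K$-map addresses a point the paper passes over silently.

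Two corrections are needed, both local. First, the formula $\theta_f(u(y))=f(y)/(2|u(y)|)$ is inconsistent with the relations you want it to satisfy: since $\theta_f>0$ forces $u'=-1/(\theta_f\circ u)<0$, the solution $u$ is strictly decreasing, so $\mathrm{sign}(u(y))=-\mathrm{sign}(y)=-\mathrm{sign}(f(y))$, and $f/(2|u|)$ is negative for $y<0$ and discontinuous at $0$ (it equals $\mathrm{sign}(u)\cdot f/(2u)$ with $f/(2u)$ smooth and nonvanishing). The correct definition, the one that makes $f=-2u(\theta_f\circ u)$, $u'=-1/(\theta_f\circ u)$ and your pull-back computation of the metric come out right, is $\theta_f(u(y))=-f(y)/(2u(y))=|f(y)|/(2|u(y)|)$; with that fix your smoothness and positivity argument at $u=0$ is exactly right. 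Second, the ``self-equivalences of $f$'' you allow in the final uniqueness step are in fact trivial: a relation $f(y)=a^2f(y/a+b)$ combined with your normalization $f(0)=0$, $f'(0)=2$ and the uniqueness of the zero forces $b=0$ and then $a=1$, so the only isometries of the half-saddle are the translations along $K_f$, as the paper asserts; your extra case is vacuous but harmless. (Also, in gluing the two one-sided solutions of $u'=2u/f$ it is the first derivative, not the value, that must be matched -- the value is $0$ on both sides automatically -- but the conclusion that the matched solution is $C^\infty$ is correct since both sides are $Cye^{H(y)}$ with the same smooth $H$.)
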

\begin{proof} 
We start with the linear case ie with the domino $D_0=\R\times I_0$ defined by  $f_0(y)=\lambda y$, $\lambda\in \R^*$,  $I_0\ni 0$ but   may be different from $\R$. In this case, an embedding is given by $\Phi_0: (x,y)\mapsto (y\exp(\lambda x/2), \exp(-\lambda x/2))$ as one can check that $\Phi_0\ms{}^*( 2dxdy + \lambda y dx^2)=-\frac 1\lambda 2dudv $ and that $\Phi_0{}_*\partial_x= \frac \lambda 2 (u\partial_u-v\partial_v)$.

Let $f:I\rightarrow \R$ be a smooth function with a unique simple zero (wlog we assume that $f(0)=0$).  The vector field $f(y)\partial_y$ on $I$ is \emph{hyperbolic} (ie the zeros of $f$ are simple) with a unique $0$. It is well known that it  can  be linearized, see for example  \cite[Theorem 2.16]{BelTrak}, ie there exists a diffeomorphism $\varphi: I\rightarrow I_0$ such that $\varphi_*(f(t)\partial_t)=\lambda t\partial_t$, $\lambda=f'(0)$ ($I_0$ depends on the completeness of $f(y)\partial_y$). 
According to Proposition  \ref{prop_rib}, there exists a conformal   diffeomorphism $\Psi_f: D_f \rightarrow D_0$ sending $K_f$ on $K_{f_0}$, otherwise said there exists a  function $\zeta$ such that $\Psi_f\ms{}^*(2dxdy +f(y) dx^2)=\zeta(y) (2dxdy + \lambda y dx^2)$. The composition $\Phi_f=\Phi_0\circ \Psi_f$ is an isometric embedding of $R_f$ into the symmetric saddle $S_{\theta_f}=(\{(u,v)\in \R^2\,;\, uv\in I_0\}, -\frac {\zeta(uv)} {\lambda}\, 2dudv)$ sending $K_f$ on $\frac \lambda 2 (u\partial_u-v\partial_v)$.

If $\Xi:D_f\rightarrow S_{\theta_f}$ is another isometric embedding then its image is a half-space of $S_f$, therefore there exists a generic reflection $\sigma$ such $\sigma\circ \Xi (D_f)=\Phi_0\circ \Psi_f (D_f)=\Omega^+=\{(u,v)\in \R^2\,;\, uv\in I_0, \ v>0\}$. Hence, $(\sigma\circ \Xi) \circ (\Phi_0\circ \Psi_f)\ms$ is an isometry of $\Omega^+$, but every isometry of $\Omega^+$ results from the flow of $K$, \cite [Proposition 2.6]{BetM}, and therefore is the restriction of an isometry of $S_{\theta_f}$. Proving the uniqueness of the embedding in $S_{\theta_f}$. 

It follows that if $D_f$ embedds in $S_{\theta'}$ then $\theta'$ and $\theta_f$ coincide on a half-saddle and therefore are equal.
\end{proof}
\begin{defi}
Let $D_f$ be a simple domino. Let $\sigma_\alpha$ and $\sigma_\beta$ be two generic reflections of $D_f$ with axis $\alpha$ and $\beta$.  We will say that $\sigma_\alpha$ and $\sigma_\beta$ are \emph{compatible} if there exists an isometrical embedding $\Psi$ of $R_f$ into a  symmetric saddle such that the geodesics containing  $\Psi(\alpha)$ and $\Psi(\beta)$ are orthogonal (at the saddle point).
\end{defi}
\begin{rema}
According to Proposition \ref{prop_ext}, any isometrical embedding into a symmetric saddle will send the axis of compatible generic reflections on orthogonal geodesics.
\end{rema}
Let $\sigma_\alpha$ and $\sigma_\beta$ be two compatible generic reflections on a simple domino $D_f$.  Let $D_1,\dots, D_4$ be 4 copies of $D_f$ and $\Sigma_f$ be the Lorentzian surface obtained by  gluing $D_1$ to $D_2$ thanks to $\sigma_\alpha$, $D_2$ to $D_3$ thanks to $\sigma_\beta$, $D_3$ to $D_4$ thanks to $\sigma_\alpha$ and finally $D_4$ to $D_1$ thanks to $\sigma_\beta$.
Let $\Psi_f$ be an isometric embedding of $D_f$ into the symmetric saddle $S_{\theta_f}$.  The local isometries  $\Psi_f\circ \sigma_\alpha\circ \Psi_f\ms$ and $\Psi_f\circ \sigma_\beta \circ \Psi_f\ms$ extends to a global isometry denoted by $\tilde\sigma_\alpha$ and $\tilde \sigma_\beta$.  We immerse  $\coprod _{1\leq i\leq 4}D_i$ in $S_{\theta_f}$ using  $\Psi_f$ for $D_1$,  $\tilde\sigma_\alpha\circ \Psi_f$  for $D_2$,  $\tilde \sigma_\beta\circ \tilde\sigma_\alpha\circ \Psi_f$  for $D_3$, and $\tilde\sigma_\alpha\circ \tilde \sigma_\beta\circ \tilde\sigma_\alpha\circ \Psi_f$  for $D_4$. 
As  $(\tilde\sigma_\beta\circ \tilde \sigma_\alpha)^2=\mathrm{id}$, because their axis are orthogonal, this immersion induces an isometry $\Lambda_f$  between $\Sigma_f$ and $S_{\theta_f}\smallsetminus \{0\}$ (or equivalently a $K$-conformal embedding in the flat saddle $S_0$)
\begin{fact}\label{factfun}
If $\Psi: D_f\rightarrow D_g$ is a  $K$-conformal diffeomorphism between simple dominos then $\Psi^*D_g$ and $D_f$ have the same pairs of \emph{compatible} generic reflections
\end{fact}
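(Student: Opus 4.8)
The plan is to turn compatibility into a statement inside a \emph{flat} saddle, where it manifestly depends only on the conformal (indeed $K$-conformal) data.

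First, by the remark following Proposition~\ref{prop_rib}, the dominos $D_f$ and $N:=\Psi^{*}D_g$ have the same underlying manifold, the same vertical and horizontal foliations, and the same generic reflections, with the same axes; moreover the identity $\mathrm{id}\colon D_f\to N$ is a $K$-conformal diffeomorphism, since the pulled-back metric equals $a\varphi'(y)$ times that of $D_f$ (a factor of constant sign) and $\partial_x$ is preserved. In particular the assertion is symmetric in $D_f$ and $N$, and it suffices to prove that a pair of generic reflections compatible for $D_f$ is compatible for $N$.

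The key is the following reformulation, valid for an arbitrary simple domino $D$. By Proposition~\ref{prop_ext}, $D$ embeds isometrically and essentially uniquely into its symmetric saddle $S_{\theta_D}$; composing with the conformal map $S_{\theta_D}\to S_0$ that is the identity in the coordinates $(u,v)$ and replaces $\theta_D$ by $1$, one gets a $K$-conformal embedding $j_D\colon D\hookrightarrow S_0$ onto a half-saddle of a flat saddle, which carries the axis of each generic reflection of $D$ to a radial geodesic (already in $S_{\theta_D}$ these axes extend to radial geodesics). Since orthogonality of two non-lightlike tangent directions at a point is determined by the lightlike cone there -- the second direction being the harmonic conjugate of the first with respect to the two null lines -- it is a conformal invariant; combining this with the uniqueness clause in Proposition~\ref{prop_ext} one gets: $(\sigma_\alpha,\sigma_\beta)$ is compatible for $D$ if and only if $j_D$ sends $\gamma_\alpha$ and $\gamma_\beta$ to radial geodesics of $S_0$ orthogonal at the saddle point. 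Passing to the flat saddle buys rigidity: a $K$-conformal self-map of a half-saddle of $S_0$ must preserve the two lightlike foliations and $K$ up to a scalar, which forces it to have the form $(u,v)\mapsto(Au,Bv)$ and hence to extend to a $K$-conformal automorphism of $S_0$; so $j_D$ is unique up to post-composition with such an automorphism, and such automorphisms obviously preserve the property ``the images of $\gamma_\alpha$ and $\gamma_\beta$ are orthogonal radial geodesics''.

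Now I would conclude. The flat saddle attached to $D_f$ and the one attached to $N$ coincide up to a linear rescaling of $(u,v)$, because the domain $\{uv\in I_0\}$ only encodes the linearising interval of the common orbit vector field. Under this identification $j_{D_f}$ and $j_{N}\circ(\mathrm{id}\colon D_f\to N)$ are two $K$-conformal embeddings of $D_f$ into the same $S_0$, hence differ by a $K$-conformal automorphism of $S_0$; and since $\mathrm{id}\colon D_f\to N$ fixes every generic reflection and every axis, the reformulation of the previous paragraph shows that $(\sigma_\alpha,\sigma_\beta)$ is compatible for $D_f$ exactly when it is compatible for $N$. I expect the genuinely delicate points to be the rigidity statement (that the indeterminacy of $j_D$ is exactly the linear maps $(u,v)\mapsto(Au,Bv)$, so that it extends to $S_0$) and the bookkeeping identifying the two flat saddles; once these are in place the rest is a formal consequence of Propositions~\ref{prop_rib} and \ref{prop_ext}.
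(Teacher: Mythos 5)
Your proof is correct and follows essentially the same route as the paper: reduce compatibility to orthogonality of radial axes in the flat saddle $S_0$, observe this is a $K$-conformal invariant, and compare the two embeddings. The paper shortcuts your rigidity/identification step by noting that the \emph{single} map $\Phi_0\circ\Psi_f$ from the proof of Proposition~\ref{prop_ext} is simultaneously a $K$-conformal embedding of $D_f$ and of $\Psi^*D_g$ into the same $S_0$, which makes the comparison of the two flat saddles and of the two embeddings automatic.
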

\begin{proof}
We keep the notation from the proof of Proposition \ref{prop_ext}. 
The map $\Phi_0\circ \Psi_f$ is a $K$-conformal embedding of both $\Psi_{f,g}^*D_g$ and $D_f$ in the flat saddle $S_0$. Therefore being compatible for $\Psi_{f,g}^*D_g$ or $D_f$ is the same.
\end{proof}
The above construction starting  with $D_g$, using the embedding $\Psi_g=\Psi_f\circ \Psi_{f,g}\ms$ of $D_g$ in $S_0$,  and the compatible (according to Fact \ref{factfun}) generic reflections $\Psi_{f,g}\circ \sigma_\alpha \circ \Psi_{f,g}\ms$ and $\Psi_{f,g}\circ \sigma_\beta \circ \Psi_{f,g}\ms$, provides us a surface $\Sigma_g$ and a conformal  embedding $\Lambda_g$ of $\Sigma_g$ in 
the flat saddle $S_0$. 
It is obvious that the map $\Psi_{f,g}$ induces a conformal diffeomorphism $\Lambda_{f,g}$ between $\Sigma_{f}$ and $\Sigma_{g}$ that satisfies  $\Lambda_g\circ \Lambda_{f,g}=  \Lambda_f$. It proves the following.
\begin{prop}\label{prop_confsad}
Let $D_f$ and $D_g$ be two simple dominos and let $S_{\theta_f}$ and $S_{\theta_g}$ be their saddle extensions.
Any $K$-conformal diffeomorphism $\Psi: D_f\rightarrow D_g$ is the restriction of a
$K$-conformal diffeomorphism between the saddles $S_{\theta_f}$ and $S_{\theta_g}$. 
\end{prop}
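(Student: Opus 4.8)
The plan is to conjugate $\Psi$, through the half-saddle embeddings of Proposition~\ref{prop_ext}, into a map between the two saddles punctured at their saddle points, and then to see that this map extends across the missing point because, read in null coordinates, it is a homothety. Concretely, I would first fix isometric embeddings $j_f\colon D_f\hookrightarrow S_{\theta_f}$ and $j_g\colon D_g\hookrightarrow S_{\theta_g}$ onto half-saddles as in Proposition~\ref{prop_ext}, and recall from its proof that, after the linearisation $\varphi_f$ of $f(t)\partial_t$ (so $(\varphi_f)_*\,f\partial_t=\lambda_f t\partial_t$ with $\lambda_f=f'(0)$), one may take $j_f(x,y)=\bigl(\varphi_f(y)\,e^{\lambda_f x/2},\,e^{-\lambda_f x/2}\bigr)$, and likewise $j_g(x,y)=\bigl(\varphi_g(y)\,e^{\lambda_g x/2},\,e^{-\lambda_g x/2}\bigr)$. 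By Proposition~\ref{prop_rib}, $\Psi(x,y)=(ax+t_0,\varphi(y))$ with $a\neq0$ and $\varphi_*\,f\partial_t=a\,g\partial_t$; differentiating this identity at the common zero yields $\lambda_f=a\lambda_g$ and $\varphi(0)=0$, so that $\chi:=\varphi_g\circ\varphi\circ\varphi_f^{-1}$ is a diffeomorphism fixing $0$ which conjugates the linear vector field $\lambda_f t\partial_t$ to itself.

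The content is concentrated in the observation that such a $\chi$ must be a homothety: the equation $\lambda_f t\,\chi'(t)=\lambda_f\chi(t)$ forces $\chi(t)=Ct$ for some $C\neq0$ (the two one-sided constants agreeing by smoothness at $0$). Inserting this together with the explicit embeddings into $j_g\circ\Psi\circ j_f^{-1}$, a short computation gives $j_g\circ\Psi\circ j_f^{-1}(u,v)=\bigl(\chi(uv)/(\beta v),\ \beta v\bigr)$ for a constant $\beta\neq0$, which — and this is precisely where linearity of $\chi$ is used — simplifies to the restriction to the half-saddle $j_f(D_f)$ of the \emph{linear} map $L\colon(u,v)\mapsto(\kappa u,\beta v)$ with $\kappa=C/\beta$.

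Being linear, $L$ is defined on all of $\R^2$ and carries $S_{\theta_f}$ onto $S_{\theta_g}$, since $\chi$ maps the base interval of the former onto that of the latter by $w\mapsto\kappa\beta\,w$; moreover $L$ multiplies the metric $2\theta_g(uv)\,du\,dv$ of $S_{\theta_g}$ back by the nowhere-vanishing function $\kappa\beta\,\theta_g(\kappa\beta\,uv)/\theta_f(uv)$, and it fixes $u\partial_u-v\partial_v$, hence sends $K_{\theta_f}$ to a multiple of $K_{\theta_g}$. Therefore $L$ is a $K$-conformal diffeomorphism $S_{\theta_f}\to S_{\theta_g}$ whose restriction to $D_f$, via $j_f$ and $j_g$, is $\Psi$, which is exactly the assertion.

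The argument can also be organised around the construction preceding the statement: $D_f$ has exactly two generic reflections, forming its unique candidate pair, which is compatible (this holds for the linear model and is conformally invariant by Fact~\ref{factfun}), so $\Psi$ induces a $K$-conformal diffeomorphism $\Lambda_{f,g}\colon\Sigma_f\to\Sigma_g$ and hence, through the isometries $\Lambda_f,\Lambda_g$, a $K$-conformal diffeomorphism $\Phi\colon S_{\theta_f}\smallsetminus\{0\}\to S_{\theta_g}\smallsetminus\{0\}$ agreeing with the transported $\Psi$ on a half-saddle. One still has to fill in the puncture, and the computation above is what does it: writing $\Phi(u,v)=(\phi_+(u),\phi_-(v))$ in the null coordinates $(u,v)$ that diagonalise the saddle metrics, agreement with the linear $L$ on a half-saddle together with the fact that $\Phi$ is a diffeomorphism onto $S_{\theta_g}\smallsetminus\{0\}$ forces $\phi_+$ and $\phi_-$ to be the homotheties $L$ already exhibits, so $\Phi=L$ and in particular extends across the saddle point. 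In either route the only genuine obstacle is recognising that the conjugating diffeomorphism $\chi$ collapses to a homothety, with the attendant collapse of the $y$-dependence; the remaining verifications are routine.
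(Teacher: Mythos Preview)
Your proof is correct and, at its core, executes the same reduction as the paper: pass to the flat/linearised model and observe that the induced map on the punctured saddle extends across the origin. The difference is one of organisation.

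The paper does not compute $j_g\circ\Psi\circ j_f^{-1}$ directly. Instead it uses the gluing construction $\Sigma_f,\Sigma_g$ (four copies of the domino glued by a compatible pair of generic reflections) together with the specific choice of embedding $\Psi_g=\Psi_f\circ\Psi_{f,g}^{-1}$ of $D_g$ into the flat saddle $S_0$. With this choice one gets conformal embeddings $\Lambda_f,\Lambda_g$ of $\Sigma_f,\Sigma_g$ into $S_0$ satisfying $\Lambda_g\circ\Lambda_{f,g}=\Lambda_f$, so that the map induced on $S_0\smallsetminus\{0\}$ is literally the identity; the extension across the origin is then tautological. In your language, the paper arranges for $\chi=\mathrm{id}$ and $\beta=1$ by absorbing the freedom into the choice of $\varphi_g$ and of the base point, rather than proving ex post that $\chi$ must be a homothety.

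Your route is more explicit and entirely self-contained: you keep the embeddings $j_f,j_g$ fixed and show the conjugated map is the diagonal linear map $L(u,v)=(\kappa u,\beta v)$. This avoids invoking the $\Sigma_f$ construction at all, at the cost of the small computation isolating $\chi(t)=Ct$. Your second paragraph essentially rediscovers the paper's argument; your remark that ``one still has to fill in the puncture'' is fair, but note that in the paper's setup this step is vacuous because the map on $S_0\smallsetminus\{0\}$ is already the identity.

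One minor inaccuracy: a simple domino does not have ``exactly two generic reflections'' but two one-parameter \emph{families} of them (one per strip); the compatibility condition singles out pairs, one from each family, whose axes become orthogonal in the saddle. This does not affect your main argument, which does not actually use that sentence.
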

\section{The universal extensions}\label{sect_Ef} 
We recall in this section the construction of the so-called universal extension $E_f$ associated to a real function $f$. It is slightly modified in order to obtain more explicit atlases what simplifies the construction of maps between these spaces. 
Some details of the construction are nevertheless  left to the reader and can be found in \cite{BetM}.

One of the \emph{raison d'être} of these extensions is that if $\Sigma$ is a  surface with a Killing field whose ribbons can be embedded isometrically in a given ribbon $R_f$ then $\Sigma$ is locally modeled on $E_f$ (in the sense of $(G,X)$-structures). According to \cite[Proposition 3.8]{BetM}, if $\Sigma$ is connected and compact or real analytic  then it is is locally modeled on $E_f$ for some function  $f$.

We first fix some notations. Let $\{B_\alpha; \alpha\in \mathcal B\}$ be the set of strips of the ribbon $R_f$. 
 Let  $\mathcal C$ be the set of pairs   $\{\alpha,\beta\}$ of elements of $\bb$  such that  the strips $B_\alpha$ and $B_\beta$ are separated by a simple zero of $f$, we will say that such strips are \emph{contiguous}.
We denote by $D_{\alpha\beta}$ the domino containing these contiguous strips and by $S_{\alpha\beta}$ the symmetric saddle associated to it. 
Let $G$ be the group with the following presentation:
$$G=\langle  \mathcal B\ |\ \forall \alpha\in  \mathcal B, \alpha^2=1\  ; \   \forall (\alpha,\beta) \in\mathcal C, (\alpha\beta)^2=1 \rangle.$$
On each $B_\alpha$ we choose a generic reflection $\sigma_\alpha$. We  choose them such that if $(\alpha,\beta)\in \mathcal C$ then $\sigma_\alpha$ and $\sigma_\beta$ are compatible.  

We can now start the construction. 
Let $X=\coprod_{i\in G} R_i$, where the $R_i$  are copies of $R_f$.  We denote by $B_{i,\alpha}$ the copy on $R_i$ of the strip $B_\alpha$ of $R_f$ (more generally the copy on $R_i$ of any object $Z_*$ defined on $R_f$, will be denoted $Z_{i,*}$). For any $(i,j)\in G^2$ such that $\alpha:=i\ms j \in S$  we identify $B_{i,\alpha}$ to $B_{j,\alpha}$ thanks to $\sigma_\alpha$ (more correctly thanks to $\sigma_{i,\alpha}: B_{i,\alpha}\rightarrow B_{i\alpha,\alpha}, (x,y)\mapsto \sigma_\alpha(x,y)$). In doing so, we obtain a connected Lorentzian surface $Y_0$ with a Killing field (that reads $\pm \partial_x$ in the $R_i$ depending on the parity of the length of the word $i$ in $G$). 

We note that for every  $\{\alpha,\beta\}\in \mathcal C$  and every  $i\in G$, we have glued 
the dominos $D_{i,\alpha\beta}$, $D_{i\alpha,\alpha\beta}$, $D_{i\alpha\beta,\alpha\beta}$ and $D_{i\alpha\beta\alpha,\alpha\beta}$ into a surface isometric to $S_\alb\smallsetminus\{0\}$ (because we have chosen $\sigma_\alpha$ and $\sigma_\beta$ compatible, cf section \ref{sect_saddles}).
 Thus, we can glue a copy of $S_{\alpha\beta}$ to $Y_0$ along it in order to add the missing saddle point. Doing so systematically,  we obtain a Lorentzian surface $Y_f$ having a non trivial complete Killing field still denoted $K_f$.
\begin{fact}\label{factsc}
The surface $Y_f$ is simply connected.
\end{fact}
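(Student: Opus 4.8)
The plan is to build $Y_f$ from the pieces whose topology we understand and to show that each gluing step is either a homotopy equivalence or an amalgamation over a contractible (hence connected, simply connected) piece, so that van Kampen's theorem keeps the fundamental group trivial throughout. First I would record the building blocks: each ribbon copy $R_i \cong \R\times I$ is contractible; each symmetric saddle $S_{\alpha\beta}$ is contractible (it deformation retracts onto the saddle point, being a star-shaped neighbourhood of the origin in the $(u,v)$-plane intersected with $uv\in I_0$); and each strip $B_{i,\alpha}\cong\R\times I_\alpha$ is contractible. So every chart and every overlap region appearing in the construction is simply connected, which is exactly the input van Kampen needs.

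Next I would organize the gluing combinatorially using the group $G$ and its Cayley graph. Consider the surface $Y_0$ obtained by gluing the $R_i$ along the contiguous strips according to the rule $\alpha=i^{-1}j\in\mathcal B$. The combinatorial pattern of these gluings is precisely the Cayley graph $\mathrm{Cay}(G,\mathcal B)$: vertices are the copies $R_i$, and there is an edge between $R_i$ and $R_j$ whenever $i^{-1}j\in\mathcal B$. Since each $R_i$ is contractible and each overlap $B_{i,\alpha}$ is contractible and connected, repeated application of van Kampen shows that $Y_0$ is homotopy equivalent to $\mathrm{Cay}(G,\mathcal B)$ as a $1$-complex. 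Therefore $\pi_1(Y_0)$ is the fundamental group of that graph, i.e.\ a free group whose rank is the first Betti number of $\mathrm{Cay}(G,\mathcal B)$; equivalently, $\pi_1(Y_0)$ is the kernel of the natural surjection from the free group on $\mathcal B$ onto $G$, which is generated by the relations we have imposed. The relations $\alpha^2=1$ contribute no independent loop in the graph (the corresponding edge in the Cayley graph is traversed back and forth, which is null-homotopic), so $\pi_1(Y_0)$ is generated by the loops coming from the relations $(\alpha\beta)^2=1$ with $\{\alpha,\beta\}\in\mathcal C$.

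The final and crucial step is to kill exactly those remaining loops by gluing in the saddles. For each $\{\alpha,\beta\}\in\mathcal C$ and each $i\in G$, the four dominos $D_{i,\alpha\beta},\,D_{i\alpha,\alpha\beta},\,D_{i\alpha\beta,\alpha\beta},\,D_{i\alpha\beta\alpha,\alpha\beta}$ inside $Y_0$ assemble to a copy of $S_{\alpha\beta}\smallsetminus\{0\}$, and the loop in $\mathrm{Cay}(G,\mathcal B)$ corresponding to the relation $(\alpha\beta)^2=1$ based at $i$ is freely homotopic, inside this punctured saddle, to a loop around the removed point. Gluing in the full saddle $S_{\alpha\beta}$ adds a contractible piece whose intersection with $Y_0$ is the punctured saddle $S_{\alpha\beta}\smallsetminus\{0\}$, which is connected and has fundamental group $\Z$ generated by exactly that loop. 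Hence van Kampen says this gluing kills one generator of $\pi_1$ and introduces no new one. Running over all $\{\alpha,\beta\}\in\mathcal C$ and all $i\in G$ kills a generating set of $\pi_1(Y_0)$, so the resulting surface $Y_f$ has trivial fundamental group.

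The main obstacle I anticipate is bookkeeping rather than conceptual: one must verify carefully that the generators of $\pi_1(Y_0)$ produced by the relations $(\alpha\beta)^2=1$ are in bijection with (and not merely surjected onto by) the saddle-point loops being filled, so that no loop is left unkilled and no loop is killed twice in a way that would create torsion or an unexpected quotient. Equivalently, one checks that the $2$-complex obtained by attaching a $2$-cell along each relator $(\alpha\beta)^2$ to the presentation $2$-complex of $G$ is simply connected, which amounts to the (standard) observation that adding the relations $(\alpha\beta)^2=1$ to the free group on $\mathcal B$ already yields a presentation of $G$ once the relations $\alpha^2=1$ are present. Since that is exactly the presentation of $G$ we started from, the total space $Y_f$ is simply connected.
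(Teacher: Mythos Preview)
Your argument is correct and follows a genuinely different route from the paper's. The paper works directly with loops: a loop in $Y_f$ is first homotoped to a broken lightlike geodesic $\ell_0$, to which one attaches the word $w$ in the alphabet $\mathcal B$ recording the strips containing the breaking points; the loop being closed forces $w$ to be trivial in $G$, and simple-connectedness of strips and saddles lets one excise subwords $\delta^2$ and $(\alpha\beta)^2$ from $w$ by local homotopies until $w$ becomes the empty word. Your approach instead computes $\pi_1$ globally via van Kampen, in effect recognising $Y_f$ as homotopy equivalent to the Cayley $2$-complex of the given presentation of $G$, which is simply connected on general grounds. The paper's argument is shorter and stays close to the geometry; yours makes the role of the presentation of $G$ completely transparent and would adapt immediately to other gluing patterns. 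Two small points are worth tightening: (i) the literal identification of $\pi_1(Y_0)$ with the kernel of $F(\mathcal B)\to G$ is off by the $\alpha^2$ relations --- it is cleaner to say that $\pi_1(Y_0)$ is the kernel of the surjection from the free product $\ast_{\alpha\in\mathcal B}\,\Z/2\Z$ onto $G$, which is what you effectively use one sentence later; (ii) for the possibly infinite family $\{R_i\}$, ``repeated van Kampen'' should be replaced by the nerve theorem: the $R_i$ form an open cover of $Y_0$ with contractible members, contractible pairwise intersections, and empty triple intersections (distinct strips of $R_f$ are disjoint), giving $Y_0\simeq\mathrm{Cay}(G,\mathcal B)$ in one step.
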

\begin{proof}
As the ribbons are simply connected, any loop $\ell$ in $Y$, is homotopic to a broken lightlike geodesic $\ell_0$. Let $w$ be the word in the alphabet $\mathcal B$ given by the index of the strips containing the breaking points of $\ell_0$ (we can suppose that no breaking point of $\ell_0$ is a saddle point). Since $\ell$ is a loop, the image of $w$ in $G$ has to be trivial. The strips and the  saddles being simply connected we can suppose that $w$ contains no $\delta^2$ with $\delta\in \mathcal B$ and no $(\alpha\beta)^2$ with $\{\alpha,\beta\}\in \mathcal C$.  It means that the image of $w$ in the free group generated by $\mathcal B$ is trivial and $\ell$ is therefore contractible.\end{proof}
\begin{prop}
The surface $Y_f$ is isometric to the universal extension $E_f$ defined in \cite{BetM} (and will be denoted $E_f$  now).
\end{prop}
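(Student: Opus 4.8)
The plan is to recall the characterization of $E_f$ given in \cite{BetM} and to check that $Y_f$ satisfies it. In \cite{BetM}, $E_f$ is built, like $Y_f$, out of copies of the ribbon $R_f$ glued along compatible generic reflections, the missing saddle points being filled in by copies of the symmetric saddles $S_{\alpha\beta}$; it is characterized, among Lorentzian surfaces with a complete Killing field all of whose ribbons embed isometrically in sub-ribbons of $R_f$ and all of whose saddle points have a neighborhood isometric to a symmetric saddle $S_{\theta_f}$, as the unique simply connected one that is \emph{maximal}, i.e.\ that cannot be realized as a proper open subset of another surface of this kind (equivalently, every lightlike geodesic is defined on the largest interval allowed by the local model). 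So it suffices to prove three things: (i) every ribbon of $Y_f$ embeds isometrically in $R_f$, every saddle point of $Y_f$ has a symmetric saddle neighborhood, and $Y_f$ carries a complete Killing field; (ii) $Y_f$ is simply connected; (iii) $Y_f$ is maximal.

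Point (ii) is Fact \ref{factsc}, and point (i) is immediate from the construction: each $R_i$ is a copy of $R_f$, each glued piece $S_{\alpha\beta}$ is a symmetric saddle whose half-saddles are dominoes of $R_f$ (Proposition \ref{prop_ext}), and $K_f$ was arranged to be complete. For (iii) I would argue as in the proof of Fact \ref{factsc}: a maximal lightlike geodesic of $Y_f$ either is one of the vertical leaves of a ribbon, which is already complete there, or it lies in a chain of strips, and then, following it, at each simple zero of $f$ one passes from a strip $B_{i,\alpha}$ to a contiguous strip $B_{j,\beta}$ through a \emph{filled} saddle point; in either case it is never cut prematurely, so no extension of $Y_f$ can lengthen a lightlike geodesic. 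Hence, by the uniqueness part of the characterization of $E_f$ — a consequence of the fact that a simply connected surface of this kind is determined by its developing data once it is maximal — $Y_f$ is isometric to $E_f$.

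Concretely, the isometry is produced by developing: fix the copy $R_e\subset Y_f$ (with $e\in G$ the neutral element) together with the corresponding ribbon in $E_f$, and extend the tautological isometry between them by geodesic continuation across strips and through saddle points, using Propositions \ref{prop_rib}, \ref{prop_ext} and \ref{prop_confsad} to see that each overlap is governed by a generic reflection or a symmetric-saddle identification. Well-definedness along loops is exactly the statement that the identifications in $Y_f$ are generated by the relations $\alpha^2=1$ and $(\alpha\beta)^2=1$ for $\{\alpha,\beta\}\in\mathcal C$ — the first because $\sigma_\alpha$ is an involution on $B_\alpha$, the second because the extended reflections $\tilde\sigma_\alpha$ and $\tilde\sigma_\beta$ have orthogonal axes in $S_{\theta_f}$, so $(\tilde\sigma_\beta\tilde\sigma_\alpha)^2=\mathrm{id}$ (Proposition \ref{prop_ext} and the discussion preceding Proposition \ref{prop_confsad}) — and these are precisely the relations imposed in \cite{BetM}. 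The main obstacle is therefore the bookkeeping: matching the index group $G$ and the gluing maps used here with the (differently presented) combinatorial data of \cite{BetM}, and checking that the "slight modification" of the atlas announced at the start of Section \ref{sect_Ef} changes nothing up to isometry. Once the two sets of gluing data are identified, bijectivity of the developing isometry follows from the reduced-word argument of Fact \ref{factsc} applied on both surfaces.
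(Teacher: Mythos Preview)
Your overall strategy---verify that $Y_f$ satisfies the characterizing properties of $E_f$ from \cite{BetM}---matches the paper's. The paper's execution is considerably more direct, however: it invokes Th\'eor\`eme~3.21 of \cite{BetM}, which characterizes $E_f$ as the unique simply connected surface that is \emph{de classe $[f]$} (every non-saddle point lies in a copy of $R_f$), \emph{r\'eflexive} (every strip is the intersection of two maximal ribbons), and \emph{sans selles \`a l'infini} (every lightlike $K$-orbit lies in a complete lightlike geodesic). Each of these is immediate from the construction: the first two because every strip $B_{i,\alpha}$ sits in both $R_i$ and $R_{i\alpha}$, and the third because the saddles were explicitly filled in. Your replacement of ``r\'eflexive'' and ``sans selles \`a l'infini'' by a single ``maximality'' condition is not quite the characterization actually proved in \cite{BetM}, and your argument for it (tracing lightlike geodesics through chains of strips) is looser than simply reading off the three hypotheses above. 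The second paragraph---building the isometry by hand via a developing map and checking well-definedness against the relations in $G$---is then unnecessary: once the four hypotheses of Th\'eor\`eme~3.21 hold, uniqueness gives the isometry for free, and the bookkeeping you flag as the main obstacle disappears.
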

\begin{proof}
It follows from \cite[Th\'eor\` eme 3.21]{BetM}. Indeed, $Y$ is "de classe $ [f] $ "  as any point of $Y$ that is not a saddle point is contained in a copy of $R_f$ and "r\'eflexive" as each strip of $Y$ is the intersection of two copies of $R_f$. Moreover, $Y$ is "sans selles à l'infini" (without saddles at infinity) ie every  lightlike orbits of  $K$ is contained in a complete lightlike orbit (because of the presence of the saddles). At last, $Y$ is simply connected according to fact \ref{factsc}.
\end{proof}
\begin{lem}[Lemmes 3.10 and 3.17 from \cite{BetM}]\label{lem_gen}
Any generic reflection (associated to $K_f$) of $E_f$ extends into a global isometry of $E_f$.

If $f$ is $T$-periodic, then there exits an  isometry $\tau$ of $E_f$ such that $\tau(R_1)=R_1$ and that the restriction of $\tau$ to $R_1$ reads $(x,y)\mapsto (x,y+T)$.
\end{lem}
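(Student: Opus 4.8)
The plan is to realise both isometries concretely as self-maps of the disjoint union $X=\coprod_{i\in G}R_i$ that descend to the quotient defining $E_f$ and then extend over the glued-in saddle points.

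For the first assertion, the key point I would establish is that $G$ acts on $E_f$ by isometries: for $g\in G$ let $\tilde g$ send the point of $E_f$ with $R_i$-coordinate $(x,y)$ to the point with $R_{gi}$-coordinate $(x,y)$. This descends to the quotient because the strip identification $B_{i,\alpha}\sim B_{i\alpha,\alpha}$ is implemented by right multiplication by $\alpha$ on the index (which commutes with left multiplication by $g$), and likewise each saddle was glued over a left-translation-stable set $\{i,i\alpha,i\alpha\beta,i\alpha\beta\alpha\}$; in every chart $\tilde g$ is the identity of $R_f$, hence an isometry, and it carries $K_f$ to $(-1)^{\epsilon(g)}K_f$, where $\epsilon\colon G\to\Z/2\Z$ is the word-length parity. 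Then $g\mapsto\tilde g$ is a homomorphism, each generator acts as an involution $\tilde\alpha$, and unwinding the identification shows that $\tilde\alpha$ restricts on the image of $B_{1,\alpha}$ in $E_f$ to the generic reflection with axis $\gamma_{1,\alpha}$. To conclude, I would note that the axis of an arbitrary generic reflection $\rho$ of $E_f$ always meets some strip $B_{i,\alpha}$ (if it passes through a saddle point it is a radial line there, and such a line meets a strip of one of the four dominos composing that saddle), and inside that strip it is a translate, by the complete flow $\Phi^t_{K_f}$, of $\gamma_{i,\alpha}$ for a suitable generator $\alpha$ and a suitable $i\in G$. Hence $\rho$ agrees on a non-empty open set with the global isometry $\Phi^t_{K_f}\circ\widetilde{i\alpha i^{-1}}$; since $E_f$ is connected and an isometry is determined by its germ on any open set, $\rho$ is the restriction of that global isometry.

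For the second assertion, I would start from the observation that $h\colon R_f\to R_f$, $(x,y)\mapsto(x,y+T)$, is a $K$-isometry (because $f(y+T)=f(y)$). It induces a shift $\nu$ on the index set $\mathcal B$ of strips, and since the set of simple zeros of $f$ is $T$-periodic, $\nu$ preserves contiguity and therefore extends to an automorphism $\nu_*$ of $G$ preserving $\epsilon$. As $\nu$ acts freely on $\mathcal B$, one can re-choose the auxiliary generic reflections $\sigma_\alpha$ to be $\nu$-equivariant — pick them on a set of $\nu$-orbit representatives and transport them by $h$ — without changing $E_f$ or its isometry type; with that choice $h\circ\sigma_\alpha=\sigma_{\nu\alpha}\circ h$ on $B_\alpha$. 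Then the map $\tau$ sending the point with $R_i$-coordinate $(x,y)$ to the point with $R_{\nu_*(i)}$-coordinate $(x,y+T)$ respects every strip gluing and, by Proposition \ref{prop_confsad}, extends over each saddle point; it is the desired isometry, with $\tau(R_1)=R_{\nu_*(1)}=R_1$ and $\tau|_{R_1}(x,y)=(x,y+T)$.

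I expect the real obstacle to lie entirely in the second part: arranging the $\nu$-equivariant re-choice of the $\sigma_\alpha$ so that every contiguous pair stays \emph{compatible} (so the saddles still close up after gluing), i.e.\ bookkeeping the combinatorial data $(\mathcal B,\mathcal C,(\sigma_\alpha))$ consistently along the orbits of $\nu$. Alternatively one can sidestep this altogether by invoking the characterisation of $E_f$ recalled just above: $h$ is a $K$-isometry of the seed ribbon $R_1\subset E_f$, and the universal extension, being intrinsically determined by $[f]$ together with simple connectedness, reflexivity and the absence of saddles at infinity, must inherit it.
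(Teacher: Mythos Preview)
Your proposal is correct and follows essentially the same route as the paper: both assertions are realised by explicit self-maps of $\coprod_{i\in G}R_i$ (left multiplication on the index for the reflections, the map $(x,y)_i\mapsto(x,y+T)_{\mu(i)}$ for $\tau$) that descend to $Y_0$ and then extend across the saddle points. You are in fact a touch more careful than the paper on the one delicate point---the paper's ``Consequently, we can choose the generic reflections\ldots'' passes over exactly the compatibility-across-$\nu$-orbits issue you flag, and your fallback via the universal characterisation of $E_f$ is the cleanest way to close it; the only cosmetic slips are that the global isometry matching $\rho$ is $\Phi^{2t}_{K_f}\circ\widetilde{i\alpha i^{-1}}$ rather than $\Phi^{t}_{K_f}\circ\widetilde{i\alpha i^{-1}}$, and that the four-index sets $\{i,i\alpha,i\alpha\beta,i\alpha\beta\alpha\}$ are permuted (not individually fixed) by left translation.
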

\begin{proof}
According to \cite[Proposition 2.38]{BetM} any isometry between punctured saddles extends to an isometry between the saddles. Consequently, it is enough to prove the lemma on the surface $Y_0$. It is also enough to prove the first point  for the generic reflections used in the construction of $Y_0$ (and conjugate by the flow of $K$ to obtain the others). For any $(x,y)\in R_f$ and $i\in G$, we denote by $(x,y)_i$ the point of $R_i$ whose coordinates are $(x,y)$. 

Let $\alpha\in \bb$ and $i_0\in G$. The map $\widetilde 
\sigma_{\alpha}:\coprod R_i\rightarrow \coprod R_i, (x,y)_i\mapsto  (x,y)_{\alpha i}$ is an isometry that induces an isometry $\widehat \sigma_{\alpha}$ of $Y_0$ (because  $i\ms j=(\alpha i)\ms (\alpha j)$ and the $\sigma_\beta$'s are involutions). The image of the strip $B_{i_0,\alpha}$ in $Y_0$ is invariant by $\widehat \sigma_{\alpha}$ and coincide with $\sigma_{\alpha}$ on it.

Assume $f$ is $T$-periodic. The map $\tau_0:R_f\rightarrow R_f, (x,y)\mapsto (x,y+T)$ sends strips on strips, therefore there exists  a map $\mu:\bb\rightarrow \bb$  such that $\tau_0(B_\alpha)=B_{\mu(\alpha)}$. Clearly, if $\{\alpha,\beta\}\in \mathcal C$ then so does $\{\mu(\alpha),\mu(\beta)\}$. It means that $\mu$ preserves the relations defining $G$ and therefore induces an endomorphism  of $G$, still denoted $\mu$.  Moreover if $\sigma_\alpha$ and $\sigma_\beta$ are compatible then so are $\tau_0\circ \sigma_\alpha \circ \tau_0\ms$ and  $\tau_0\circ \sigma_\beta \circ \tau_0\ms$. Consequently, we can choose the generic reflections used to construct $E_f$ so that for any $\delta\in \bb$ , $\tau_0\circ \sigma_\delta \circ \tau_0\ms=\sigma_{\mu(\delta)}$.

Let $\widetilde \tau :\coprod R_i\rightarrow \coprod R_i, (x,y)_i\mapsto  (x,y+T)_{\mu(i)}$. Again $\widetilde \tau$ induces a map $\widehat\tau: Y^0\rightarrow Y^0$. Indeed, if $(x,y)_i$ and $(x',y')_j$ are  distinct points glued together, then $i\ms j=\alpha\in \bb$ and $(x',y')=\sigma_\alpha(x,y)$. Furthermore, $\widetilde \tau ((x,y)_i)=(x+T,y)_{\mu(i)}$ and $\widetilde \tau ((x',y')_j)=(x'+T,y')_{\mu(j)}$ but we see that $\mu(i)\ms\mu(j)=\mu(\alpha)$  
and  $\sigma_{\mu(\alpha)}(x'+T,y')=(x+T,y)$, therefore $\widetilde \tau ((x,y)_i)$ and $\widetilde \tau ((x',y')_j)$ are also identified on $Y^0$. Hence, $\widehat \tau$ is the isometry we are looking for.
\end{proof}
\begin{defi}
We denote by $\mathrm{Is}_{\rm gen}(E_f)$ the subgroup of the isometry group of $E_f$ generated by the generic reflections (associated to $K_f$). Note that $\mathrm{Is}_{\rm gen}^0(E_f)$, the identity component of  $\mathrm{Is}_{\rm gen}(E_f)$, is  the flow of $K_f$. 
We denote by $\mathrm{Is}^\pm(E_f, K_f)$ the group of isometries of $E_f$ sending $K_f$ on $\pm K_f$ (when the curvature is not constant, it is  the isometry group of $E_f$).
\end{defi}
\section{The results}
\subsection{Conformal geometry of the universal extensions}
\begin{nota}
Let $f:I\rightarrow \R$ be a smooth periodic function and Per$(f)$ be the set of its non trivial periods. For any $P\in \mathrm{Per}(f)$, we denote $X_{f,P}$ the vector field on $\R/P\Z$ induced by $f\partial_t$.
\end{nota}
\begin{theo}\label{theo_conf}
Let $f: I \rightarrow \R$, $g: J\rightarrow \R$   be two smooth  non constant functions. Let $E_f$ and $E_g$ be the universal extensions  associated to them.

There exists a conformal  diffeomorphism $\Phi : E_f\rightarrow E_g$ such that $\Phi\, \mathrm{Is}_{\rm gen}(E_f)\,\Phi\ms=\mathrm{Is}_{\rm gen}(E_g) $ if and only if there exists $a\in \R^*$ such that the vector fields $f\partial_t$ and $ag\partial_t$ are diffeomorphic.

Moreover, if $f$ and $g$ are both periodic and if 
there exists $(P,Q)\in \mathrm{Per}(f)\times \mathrm{Per}(g)$ such that $X_{f,P}$ and $X_{g,Q}$ are diffeomorphic 
then  we can choose $\Phi$ such that $\Phi\,
 \mathrm{Is}^\pm(E_f, K_f)\,\Phi\ms$ and $\mrm{Is}^\pm(E_g, K_g)$ are commensurable.  It is also true if $f$ and $g$ are both non periodic.
\end{theo}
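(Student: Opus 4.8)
The plan is to build $\Phi$ strip-by-strip (or rather ribbon-by-ribbon), using the local model Proposition~\ref{prop_rib} as the engine and the combinatorial bookkeeping of the construction of $E_f$ in Section~\ref{sect_Ef} as the frame. First I would treat the ``only if'' direction: given such a $\Phi$, pick a ribbon $R_1\subset E_f$; since $\Phi$ conjugates $\mathrm{Is}_{\rm gen}$-groups and $\mathrm{Is}_{\rm gen}^0(E_f)$ is exactly the flow of $K_f$, $\Phi$ sends $K_f$ to a nonzero multiple of $K_g$ on a suitable ribbon $R'\subset E_g$, hence restricts to a $K$-conformal diffeomorphism of ribbons, and Proposition~\ref{prop_rib} forces the horizontal map $\varphi$ with $\varphi_*(f\partial_t)=a\,g\partial_t$ on the relevant intervals. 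To promote this to a global diffeomorphism $f\partial_t\simeq a g\partial_t$ (on all of $I$, $J$) one uses that a ribbon $R_1$ already carries a full period of $f$ when $f$ is periodic, and in the non-periodic case one propagates across dominoes using Proposition~\ref{prop_confsad} (a $K$-conformal map of dominoes extends to the saddles), matching zeros to zeros; the order $\prec$ on $\mathcal B$ is preserved because $\varphi$ is a diffeomorphism, so the combinatorial types line up.

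For the ``if'' direction, start from the diffeomorphism $\varphi:I\to J$ with $\varphi_*(f\partial_t)=a\,g\partial_t$. By Proposition~\ref{prop_rib} this gives a $K$-conformal diffeomorphism $\Psi_{f,g}:R_f\to R_g$, $\Psi_{f,g}(x,y)=(ax,\varphi(y))$ (after absorbing $t_0$). I would then check $\Psi_{f,g}$ respects the combinatorial data: it maps the strips $B_\alpha$ bijectively to the strips of $R_g$, inducing an order-preserving bijection $\mathcal B_f\to\mathcal B_g$ that sends contiguous pairs to contiguous pairs, hence an isomorphism of the presented groups $G_f\cong G_g$. By Fact~\ref{factfun} $\Psi_{f,g}$ carries compatible pairs of generic reflections to compatible pairs, so we may choose the generic reflections used to build $E_f$ and $E_g$ compatibly with $\Psi_{f,g}$. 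Then $\Psi_{f,g}$ lifts tautologically to a conformal diffeomorphism $\coprod_{i\in G_f}R_i\to\coprod_{i\in G_g}R_i$ descending through the gluings (exactly as in the proof of Lemma~\ref{lem_gen}, where the endomorphism $\mu$ plays an analogous role), extending over the saddle points by Proposition~\ref{prop_confsad}, and giving the desired $\Phi:E_f\to E_g$ which by construction conjugates the generic-reflection groups.

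It remains to get the statement about $\mathrm{Is}^\pm$. Here the point is that $\Phi\,\mathrm{Is}^\pm(E_f,K_f)\,\Phi^{-1}$ and $\mathrm{Is}^\pm(E_g,K_g)$ both contain, as a finite-index subgroup, the group generated by $\mathrm{Is}_{\rm gen}$ together with the period translation $\tau$ of Lemma~\ref{lem_gen}: the quotient $\mathrm{Is}^\pm(E_f,K_f)/\mathrm{Is}_{\rm gen}(E_f)$ acts on the space of orbits of $K_f$ (a circle when $f$ is periodic, a line otherwise), and in the periodic case $\tau$ generates the translations by $P\Z$ there while the full normaliser can only add a finite cyclic piece (rotations of the $f$-pattern) plus possibly one reflection; in the non-periodic case $\mathrm{Is}^\pm/\mathrm{Is}_{\rm gen}$ is already finite. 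So if $X_{f,P}\simeq X_{g,Q}$ for some periods $P,Q$, I would run the ``if'' construction starting from a period-respecting $\varphi$ so that $\Phi$ additionally intertwines the translations $\tau_f$ and $\tau_g$; then $\Phi$ conjugates the two finite-index ``generic $+$ translation'' subgroups to each other, whence the two $\mathrm{Is}^\pm$ groups are commensurable.

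The main obstacle I expect is the bookkeeping in the ``only if'' direction when $f$ (or $g$) is non-periodic and the intervals $I,J$ are proper: one must argue that $\Phi$, built a priori only on one ribbon, genuinely propagates to an honest diffeomorphism $I\to J$ conjugating the vector fields — i.e.\ that the zeros of $f$ and $g$ are in order-preserving bijection and the completeness data at the ends of $I_\alpha$ match. This is where Proposition~\ref{prop_confsad} (extending domino maps to saddle maps) together with the fact that $E_f$ is ``sans selles à l'infini'' does the real work, ruling out the scenario where $\Phi$ could only be defined on a sub-ribbon; making that rigorous, rather than the formal group-theoretic matching, is the delicate step.
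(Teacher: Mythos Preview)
Your plan is correct and follows essentially the same route as the paper: build $\Psi_{f,g}$ on ribbons via Proposition~\ref{prop_rib}, transport the combinatorics (strips, contiguity, compatible reflections via Fact~\ref{factfun}, the group $G$), glue the copies, extend over saddles via Proposition~\ref{prop_confsad}; for the converse restrict $\Phi$ to a ribbon and read off $\varphi$; for the ``moreover'' use that $\langle\mathrm{Is}_{\rm gen},\tau\rangle$ has finite index in $\mathrm{Is}^\pm$ and choose $\varphi$ period-respecting so that $\Phi$ intertwines $\tau_f$ with $\tau_g$. The ``main obstacle'' you flag is not one: each ribbon $R_i\subset E_f$ is by construction a copy of the \emph{full} $R_f=\R\times I$, so once you know $\Phi$ sends a ribbon onto a ribbon (which follows from $K$-conformality, since ribbons are exactly the $K$-saturations of maximal lightlike geodesics transverse to $K$), the induced $\varphi$ is already a diffeomorphism of all of $I$ onto all of $J$ --- no propagation across dominoes is needed, and the paper's converse is accordingly a one-line appeal to Proposition~\ref{prop_rib}.
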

Recall that two subgroups of a given group  are commensurable if their intersection has finite index in both of them.
\begin{proof} We first assume that there exists $a\in \R^*$ such that the vector fields $f\partial_t$ and $ag\partial_t$ are diffeomorphic.
According to Proposition \ref{prop_rib}, there exists a conformal  diffeomorphism $\Psi_{f,g}: R_f\rightarrow R_g$ sending $K_f$ on $a K_g$. Possibly replacing $g$ by another element in $[[g]]$ we can suppose that $a=1$. This diffeomorphism sends strips of $R_f$ on strips of $R_g$, respecting contiguity. For any $\alpha\in \bb$, we denote by $B'_\alpha$ the strip $\Psi_{f,g}(B_\alpha)$ and $\sigma'_\alpha$ the generic reflection $\Psi_{f,g}\circ \sigma_\alpha \circ \Psi_{f,g}\ms$.  According to Fact \ref{factfun}, if $\{\alpha,\beta\}\in \mathcal C$ then $\sigma'_\alpha$ and $\sigma'_\beta$ are compatible. It implies that the group used in the construction of $E_g$ is also $G$.  For any $i\in G$ we denote by $R'_i$ a copy of $R_g$.  We can  use the $\sigma'_\alpha$ to glue together the $R'_i$'s in order to get the surfaces $Y^0_g$ and $Y_g=E_g$.

Let $\Theta:\coprod R_i \rightarrow \coprod R'_i$ be the map sending each $R_i$ on the corresponding $R'_i$ via $\Psi_{f,g}$. It clearly induces a $K$-conformal diffeomorphism from $Y^0_f$ to $Y^0_g$. Proposition \ref{prop_confsad} says precisely that this diffeomorphism extends into a $K$-conformal diffeomorphism $\Phi$ between $(E_f, K_f)$ and $(E_g, K_g)$.  It follows from the proof of Lemma \ref{lem_gen} that $\Phi$ conjugates the generic reflections of $E_f$ to the generic reflections of $E_g$.

Reciprocally, if $E_f$ and $E_g$ are $K$-conformal then so are $R_f$ and $R_g$, therefore there exists $a\in \R^*$ such that $f\partial_t$ and $a g\partial_t$ are diffeomorphic (again by Proposition \ref{prop_rib}).

Let $\xi\in  \mathrm{Is}^\pm(E_f, K_f)$ and let  $R$ be a ribbon of $E_f$.  The expression of $\xi$ using "ribbon-coordinates" on $R$ and $\xi(R)$ gives an isometry of $R_f$.  According to \cite[Proposition 4.1]{BetM}, this isometry of $R_f$ does not depend on the choice of $R$ and, because $\mathrm{Is}_{\rm gen}(E_f)/\mathrm{Is}_{\rm gen}^0(E_f)$ acts simply transitively  on the set of  ribbons this correspondence induces an isomorphism between $\mathrm{Is}^\pm(E_f,K_f)/(\mathrm{Is}_{\rm gen}(E_f)/\mathrm{Is}_{\rm gen}^0(E_f))$ and 
$\mathrm{Is}^\pm(R_f,K_f)$,  the group of isometry of $R_f$ sending $K_f$ on $\pm K_f$.   Moreover, elements of  $\mathrm{Is}^\pm(R_f,K_f)$ come from symmetries of $f$ or from the flow $K_f$.
Consequently, if neither $f$ nor $g$ is periodic then  $\mathrm{Is}_{\rm gen}(E_f)$ and  $\mathrm{Is}_{\rm gen}(E_g)$ have finite index (at most $2$)  in $\mathrm{Is}^\pm(E_f, K_f)$ and $\mrm{Is}^\pm(E_g, K_g)$.

Assume now that $f$ and $g$ are periodic and that $X_{f,P}$ is diffeomorphic to $X_{g,Q}$. 
It means that there exists  $\varphi:\R\rightarrow \R$ such that $\varphi_*(f(t)\partial_t)=g(t)\partial_t$ and that $\varphi(y+P)=\varphi(y)+Q$. 
 We can start over the former construction with $\Psi_{f,g}(x,y)=(x,\varphi(y))$ and  construct new maps $\Theta$ and $\Phi$. 
Let  $\widetilde \tau_f:\coprod R_i\rightarrow \coprod R_i, (x,y)_i\mapsto  (x,y+P)_{\mu(i)}$ and  $\widetilde \tau_g:\coprod R'_i\rightarrow \coprod R'_i, (x,y)_i\mapsto  (x,y+Q)_{\mu(i)}$. 
The (new) map $\Theta$  conjugates these maps  and therefore $\Phi$ conjugates the isometries  $\tau_f$ and $\tau_g$ induced by them (see proof of Lemma \ref{lem_gen}).

The flows of $K_f$ and the map $(x,y)\mapsto (x,y+P)$ (respectively the flow of $K_g$ and   $(x,y)\mapsto (x,y+Q)$)  generate a finite index subgroups of $\mathrm{Is}^\pm(R_f,K_f)$ (resp.\  $\mathrm{Is}^\pm(R_g,K_g)$). 
The subgroup of $\mathrm{Is}^\pm(E_f, K_f)$ (resp.\ $\mrm{Is}^\pm(E_g, K_g)$) generated by  $\mathrm{Is}_{\rm gen}(E_f)$ and  $\tau_f$ (resp.\ $\mathrm{Is}_{\rm gen}(E_g)$ and $\tau_g$) have therefore  finite index in $\mathrm{Is}^\pm(E_f, K_f)$ (resp. $\mrm{Is}^\pm(E_g, K_g)$). 
\end{proof}
\begin{cor}\label{cor_ona}
Let $f$ and $g$ be two smooth non constant functions such that $f\partial_t$ is diffeomorphic to $g\partial_t$.
If $f$ and $g$ are both non periodic or both periodic and if 
there exists $(P,Q)\in \mathrm{Per}(f)\times \mathrm{Per}(g)$ such that $X_{f,P}$ and $X_{g,Q}$ are diffeomorphic,
then any  Lorentzian surface $\Sigma$  locally modeled on $E_{f}$ admits a finite cover that is K-conformal to a surface $\Sigma'$   locally modeled on $E_{g}$.
\end{cor}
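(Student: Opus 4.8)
The plan is to reduce the statement to Theorem \ref{theo_conf} and then transport $(G,X)$-structures through a finite cover. Since $f\partial_t$ is diffeomorphic to $g\partial_t$, and since under the stated hypothesis either both functions are non periodic or there is $(P,Q)\in\mathrm{Per}(f)\times\mathrm{Per}(g)$ with $X_{f,P}$ diffeomorphic to $X_{g,Q}$, Theorem \ref{theo_conf} provides a conformal diffeomorphism $\Phi:E_f\to E_g$ with $\Phi\,\mathrm{Is}_{\rm gen}(E_f)\,\Phi\ms=\mathrm{Is}_{\rm gen}(E_g)$ and with $\Phi\,\mathrm{Is}^\pm(E_f,K_f)\,\Phi\ms$ commensurable with $\mathrm{Is}^\pm(E_g,K_g)$. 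As $\Phi$ is a homeomorphism, conjugation by $\Phi$ is an isomorphism of topological groups $\mathrm{Is}_{\rm gen}(E_f)\to\mathrm{Is}_{\rm gen}(E_g)$, hence it carries $\mathrm{Is}_{\rm gen}^0(E_f)$ onto $\mathrm{Is}_{\rm gen}^0(E_g)$, i.e.\ the flow of $K_f$ onto the flow of $K_g$. In particular $\Phi_*K_f=a\,K_g$ for some $a\in\R^*$, so $\Phi$ is a $K$-conformal diffeomorphism.

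Next, let $\Sigma$ be (connected and) locally modeled on $E_f$, with developing map $D:\widetilde\Sigma\to E_f$ and holonomy $h:\pi_1(\Sigma)\to\mathrm{Is}^\pm(E_f,K_f)$, so that $D$ intertwines the deck action with $h$. Put
$$H=\mathrm{Is}^\pm(E_g,K_g)\cap\Phi\,\mathrm{Is}^\pm(E_f,K_f)\,\Phi\ms,$$
which has finite index in both factors, and let $\Gamma_0=h\ms(\Phi\ms H\Phi)\le\pi_1(\Sigma)$. Since $\Phi\ms H\Phi$ has finite index in $\mathrm{Is}^\pm(E_f,K_f)$ and the preimage of a finite-index subgroup under a homomorphism is again of finite index, $\Gamma_0$ has finite index in $\pi_1(\Sigma)$; let $\Sigma_0\to\Sigma$ be the corresponding finite covering. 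Then $\Sigma_0$ carries the pulled-back $E_f$-structure, with developing map still $D$ (same universal cover) and holonomy $h|_{\Gamma_0}$, whose image lies in $\Phi\ms H\Phi$.

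Finally, push this structure forward by $\Phi$: the local diffeomorphism $\Phi\circ D:\widetilde\Sigma\to E_g$ is equivariant for the morphism $\gamma\mapsto\Phi\,h(\gamma)\,\Phi\ms$ of $\Gamma_0$ into $H\subseteq\mathrm{Is}^\pm(E_g,K_g)$, hence it is the developing map of an $(\mathrm{Is}^\pm(E_g,K_g),E_g)$-structure on the manifold underlying $\Sigma_0$. Let $\Sigma'$ denote that manifold endowed with this new structure; by construction $\Sigma'$ is a Lorentzian surface locally modeled on $E_g$, with Killing field induced by $K_g$ through the charts. In the associated atlases the identity map $\Sigma_0\to\Sigma'$ is locally the restriction of $\Phi$, and since $\Phi$ is a $K$-conformal diffeomorphism $E_f\to E_g$, this identity map is a $K$-conformal diffeomorphism $\Sigma_0\to\Sigma'$. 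As $\Sigma_0$ is a finite cover of $\Sigma$, this proves the corollary. (When the developing map of $\Sigma$ is an embedding, which is the typical picture for the universal extensions, $\Sigma$ and $\Sigma'$ are simply quotients of $h(\pi_1\Sigma)$-invariant open subsets $U\subseteq E_f$ and $\Phi(U)\subseteq E_g$, and $\Phi$ restricts to the asserted diffeomorphism directly.)

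I expect the only genuine subtlety to be the commensurability bookkeeping: one must check that $\Gamma_0$ has finite index in $\pi_1(\Sigma)$ regardless of injectivity or surjectivity of $h$, and that the passage to the finite cover $\Sigma_0$ is exactly what forces the $\Phi$-conjugated holonomy to land inside $\mathrm{Is}^\pm(E_g,K_g)$, so that the pushed-forward $(G,X)$-structure is well defined. A minor point is the sign ambiguity in "$K$-conformal": $\Phi$ conjugates $\mathrm{Is}^\pm$ and sends $K_f$ to $aK_g$, hence respects Killing fields up to sign, which suffices; if a globally defined Killing vector field is wanted, one replaces $\Sigma_0$ by a further double cover on which the holonomy preserves $K_f$.
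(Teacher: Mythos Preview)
Your proof is correct and follows essentially the same route as the paper's: invoke Theorem \ref{theo_conf} for the commensurability, pass to a finite cover so that the conjugated holonomy lands in $\mathrm{Is}^\pm(E_g,K_g)$, and pull back the metric of $E_g$ via $\Phi\circ D$. Your additional verification that $\Phi$ is $K$-conformal (via the identity components of $\mathrm{Is}_{\rm gen}$) and your explicit description of the finite-index subgroup $\Gamma_0=h^{-1}(\Phi^{-1}H\Phi)$ are details the paper leaves implicit, but the argument is the same.
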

\begin{proof} 
Let $\widetilde \Sigma$ be the universal cover of $\Sigma$ and $\Gamma$ be the fundamental group of $\Sigma$. According to \cite[Lemme 3.15]{BetM}, there exists local isometry (the so-called  developing map ) $\mathcal D:\widetilde \Sigma\rightarrow E_{f}$ and a group homomorphism $\rho: \Gamma\rightarrow \mrm{Is}^\pm(E_{f},K_{f})$ such that  for any $\nu\in \Gamma$, $\mathcal D\circ \nu=\rho(\nu)\circ \mathcal D$. 

Theorem \ref{theo_conf} says that there exists a conformal diffeomorphism $\Phi: E_{f}\rightarrow E_{g}$ such that $(\Phi \mathrm{Is}^\pm(E_{f},K_{f})\Phi\ms)$ and $ \mrm{Is}^\pm(E_{g},K_{g})$ are commensurable.
Therefore, replacing possibly $\Sigma$ by a finite cover of itself, we can assume that $\Phi\rho(\Gamma)\Phi\ms\subset  \mrm{Is}^\pm(E_{g},K_{g})$. We denote by $\widetilde \Sigma'$ the surface $\widetilde \Sigma$  endowed with the pull-back by $\Phi\circ \mathcal D$ of the metric of $E_{g}$, it  is  $K$-conformal to $\widetilde \Sigma$ and  invariant by the action of $\Gamma$. Thus, the surface $\Sigma'=\widetilde \Sigma'/\Gamma$ has the desired properties.
\end{proof}
\begin{rema} 
As noted at the end of section \ref{sect_ribb}, to any diffeomorphism  $\varphi\neq \mathrm{id}$ sending $f\partial_t$ (respectively  $X_{f,P}$) on itself, we can associate  a non trivial $K$-conformal diffeomorphism $\psi_{f,f}$ of $R_f$.  Repeating  the proof of Theorem \ref{theo_conf} with this map instead of $\Psi_{f,g}$   provides us a non trivial $K$-conformal diffeomorphisms of $E_f$ centralizing $\mathrm{Is}_{\rm gen}(E_f)$ (respectively a finite index  subgroup of $\mathrm{Is}^\pm(E_f, K_f)$). Similarly, the proof of Corollary \ref{cor_ona} shows that if $\varphi\neq \mathrm{id}$ sends $f\partial_t$ ($X_{f,P}$  when $f$ is periodic) on itself, then $\varphi$ induces a non trivial  $K$-conformal transformations of a finite cover of any surface locally modeled on $E_f$.
\end{rema}
When $\Sigma$ is locally modeled on $E_f$, compact and non flat the space of leave of its Killing field $K_f$ is  a circle of length $P_\Sigma$. The function $f$ being given by $\langle K,K\rangle$ it is naturally $P_\Sigma$-periodic, $P_\Sigma$ 
  may not be the smallest positive  period of $f$ (as $\widetilde \Sigma$ may have smaller quotients) again see \cite{BetM} for details. 
In this case, we denote by $X_\Sigma$ the vector field on $\R/P_\Sigma\Z$ induced by $f\partial_t$.
\subsection{Conformal geometry of tori}
Corollary \ref{cor_ona} implies one half of Theorem \ref{cordestores}, the other one follows from the two following propositions.
\begin{prop}\label{prop_TT'}
Let $T$ and $T'$ be non flat Lorentzian tori with Killing fields. If $T$ and $T'$ are $K$-conformal then the vector fields 
$X_T$ is diffeomorphic to a multiple of~$X_{T'}$.
\end{prop}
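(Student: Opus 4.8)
The plan is to reduce the statement about compact tori to the structural analysis of universal extensions carried out in Section \ref{sect_Ef}, and then to show that a $K$-conformal diffeomorphism between the tori lifts to one between the relevant universal extensions, so that the "reciprocal" direction of Theorem \ref{theo_conf} applies. First I would pass to universal covers: writing $\widetilde T$ and $\widetilde {T'}$ for them and $\Gamma$, $\Gamma'$ for the deck groups, the local models give developing maps $\mathcal D:\widetilde T\to E_f$ and $\mathcal D':\widetilde {T'}\to E_g$ together with holonomy morphisms into $\mathrm{Is}^\pm(E_f,K_f)$ and $\mathrm{Is}^\pm(E_g,K_g)$, exactly as in the proof of Corollary \ref{cor_ona}. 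Here $f=\langle K,K\rangle$ is $P_T$-periodic and $g$ is $P_{T'}$-periodic, with $X_T=X_{f,P_T}$ and $X_{T'}=X_{g,P_{T'}}$.

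The key step is to show that a $K$-conformal diffeomorphism $h:T\to T'$ lifts to a $K$-conformal diffeomorphism $\widetilde h:E_f\to E_g$. Lift $h$ first to $\widetilde h_0:\widetilde T\to\widetilde{T'}$ between the universal covers; this is $K$-conformal and equivariant for some isomorphism $\Gamma\to\Gamma'$. Now I would use the fact (recalled at the start of Section \ref{sect_Ef}, via \cite[Proposition 3.8]{BetM}) that a compact surface with Killing field is locally modeled on a unique such $E_f$, and more precisely that $E_f$ is the "maximal" extension of $\widetilde T$ obtained by adjoining saddle points and completing lightlike orbits: $E_f$ is characterized by being simply connected, reflexive, of class $[f]$, and without saddles at infinity (the proof that $Y_f\cong E_f$ spells this out). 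Since a $K$-conformal map preserves ribbons, strips, the compatible generic reflections (Fact \ref{factfun}) and saddles (Proposition \ref{prop_confsad}), the map $\widetilde h_0$ extends step by step over the added strips and saddle points, yielding $\widetilde h:E_f\to E_g$ which is $K$-conformal and conjugates $\mathrm{Is}_{\rm gen}(E_f)$ to $\mathrm{Is}_{\rm gen}(E_g)$; it also conjugates the holonomy $\rho(\Gamma)$ into $\mathrm{Is}^\pm(E_g,K_g)$.

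Once $\widetilde h$ exists, the reciprocal direction of Theorem \ref{theo_conf} gives $a\in\R^*$ with $f\partial_t$ diffeomorphic to $ag\partial_t$; but I need the sharper periodic conclusion. For this I would track periodicity: the compactness of $T$ forces $\rho(\Gamma)$ to contain an element acting on each ribbon as a translation $(x,y)\mapsto(x,y+P_T)$ (the one generating the circle of $K$-orbits), and likewise for $T'$ with $P_{T'}$. Conjugating by $\widetilde h$ and reading everything in ribbon coordinates via \cite[Proposition 4.1]{BetM}, the diffeomorphism $\varphi$ with $\varphi_*(f\partial_t)=ag\partial_t$ intertwines the translation by $P_T$ with the translation by $P_{T'}$ — that is, $\varphi(y+P_T)=\varphi(y)+P_{T'}$ after rescaling by $a$ — so $\varphi$ descends to a diffeomorphism $\R/P_T\Z\to\R/P_{T'}\Z$ carrying $X_T=X_{f,P_T}$ to $aX_{g,P_{T'}}=aX_{T'}$. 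This is the claimed statement.

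The main obstacle I expect is the extension step: verifying carefully that $\widetilde h_0$ extends from $\widetilde T$ to all of $E_f$, i.e. that the lightlike orbits and saddle points missing from (a cover of) $\widetilde T$ but present in $E_f$ are sent by $\widetilde h_0$ into the analogous structure of $E_g$. This requires knowing that $E_f$ depends only on the germ of the $(G,X)$-structure near any ribbon (uniqueness of the maximal extension in the class of simply connected, reflexive, saddle-complete surfaces of class $[f]$), which is precisely what the structure theory of \cite{BetM} recalled in Section \ref{sect_Ef} supplies; the rest is bookkeeping with ribbons, compatible reflections and Proposition \ref{prop_confsad}.
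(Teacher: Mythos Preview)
Your route is genuinely different from the paper's, but the extension step you correctly flag as the main obstacle is a real gap, and filling it honestly amounts to doing the paper's argument.

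The paper does not pass through $E_f$ at all. It lifts $\Psi$ only to the holonomy covers $\widehat T,\widehat T'$ (the cyclic covers on which the Killing field has no closed orbits), decomposes $\widehat T$ into finitely many sub-ribbons $R_{f_1},\dots,R_{f_{2k}}$ glued along the ``type~2'' strips by generic reflections (the $f_i$ being restrictions of $f$ that assemble into the periodic $f$), and observes via Proposition~\ref{prop_rib} that on each piece the lifted map reads $(x,y)\mapsto(ax+t_i,\varphi_i(y))$ with the same $a$. The core of the proof is then a two-line computation: since a generic reflection affects only the $x$-coordinate, the gluing relations $\Psi_{i-1}=\sigma'_i\,\Psi_i\,\sigma_i$ force $\varphi_{i-1}=\varphi_i$ on the overlap, so the $\varphi_i$ glue to a single diffeomorphism of $\R/P_T\Z$ sending $X_T$ to $aX_{T'}$. (When $k=0$ the universal cover is already a single ribbon and the argument is even shorter, using only equivariance under the translation by $P_T$.)

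Your extension $\widetilde h_0\rightsquigarrow\widetilde h:E_f\to E_g$ does not follow from Fact~\ref{factfun} and Proposition~\ref{prop_confsad} alone: those results describe how a $K$-conformal map behaves on a single domino or saddle, not that a $K$-conformal map defined on an arbitrary open $K$-saturated subset of $E_f$ extends to all of $E_f$. Concretely, the image $\mathcal D(\widetilde T)\subset E_f$ meets several (pieces of) ribbons, and on each you obtain a local $\varphi$; knowing these coincide, so that one can invoke the construction of $\Phi$ from the proof of Theorem~\ref{theo_conf}, is precisely the compatibility-across-reflections step the paper carries out. Once that step is done the detour through $E_f$ is superfluous, since the glued $\varphi$ is already the circle diffeomorphism you want. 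A minor inaccuracy: the holonomy element you invoke does not act as $(x,y)\mapsto(x,y+P_T)$ on \emph{each} ribbon of $E_f$; it is the isometry $\tau_f$ of Lemma~\ref{lem_gen}, which permutes the copies $R_i$ via $\mu$ and reads that way only on $R_1$.
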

\begin{proof}
Let $\Psi:T\rightarrow T'$ be a $K$-conformal diffeomorphism. 
We denote by  $\widehat T$ and  $\widehat T'$ the holonomy coverings of $T$ and $T'$  ie  their cyclic coverings whose Killing fields  have no closed obits. 
The complement of the lightlike orbits of $\widehat K$ (a Killing field of $\widehat T$)  is a (possibly infinite) union of strips. A finite, even number of them are of "type 2" ie are bounded by lightlike geodesics belonging to distinct lightlike foliations. We denote them $S_1,\dots, S_{2k}$.  Note that $k$ is a conformal invariant.

If $k=0$, then there exists a lightlike geodesic cuting every orbit of $\widehat K$. It means that  the universal cover of $T$ (resp. $T'$) is (isometric to) a ribbon $R_f$ (resp. $R_g$). 
Consequently,  $\widehat T\simeq R_f/\langle \tau \rangle$ (resp.  $\widehat T\simeq R_g/\langle \tau' \rangle$) where $\tau$ (resp. $\tau'$)  is the isometry  that reads $(x,y)\mapsto (x+\delta  ,y+P)$ (resp.  $(x,y)\mapsto (x+ \delta' ,y+P')$) where $P>0$ (resp  $P'>0$) is the natural period of $f$ (resp. $g$)  and $\delta\in \R$ (resp $\delta'$) is a twist parameter. The map $\Psi$ lifts into a map $\widetilde \Psi: R_f\rightarrow R_g$ that reads $(x,y)\mapsto (ax,\varphi(y))$, for some $a\neq 0$,  and satisfies $\widetilde \Psi\circ \tau=\tau'\circ \widetilde \Psi$. Thus we have $\varphi(y+P)=\varphi(y)+P'$ therefore $\varphi$ induces a diffeomorphism between $\R/P\Z$ and $\R/P'\Z$  that sends $X_T$ on $aX_{T'}$ according to Proposition \ref{prop_rib}.

When $k>0$, it follows that $\widehat T$ is a union of $2k$ ribbons $R_{f_1},\dots , R_{f_{2k}}$ such that the first strip of $R_{f_i}$ is $S_i$ and the last one is $S_{i+1}$ (of course $S_{2k+1}=S_1$). For any $1\leq i \leq 2k$ $f_i$ and $f_{i+1}$ coincide on $S_{i+1}$ so that they glue together in the periodic function $f$. Hence they can be seen as "sub-ribbons" of $R_f$. The ribbons  $R_{f_i}$ and $R_{f_{i+1}}$  are glued together thanks to a generic reflection $\sigma_{i+1}$ (it must be an isometry of $S_{i+1}$ and it must swap the lightlike foliations). 

Similarly $\widehat T'$ is a union of $2k$ ribbons denoted $R_{g_1},\dots ,R_{g_{2k}}$ glued on the strips $S'_i$ thanks to generic reflections $\sigma'_i$. We denote by $\Psi_i:R_{f_i}\rightarrow R_{g_i}$ the maps obtained by restriction of the lift of the $K$-conformal map $\Psi$.

According to Proposition \ref{prop_rib}, $\Psi_i$ reads $(x,y)\mapsto (ax+t_i,\varphi_i(y))$ where $\varphi_i$ is a diffeomorphism sending $f_i\partial_t$ on $ag_i\partial_t$, $a\neq 0$ does not depend on $i$. 
 Moreover, for any $(x,y)\in S_{i}$, we must have $ \Psi_{i-1}(x,y)=\sigma'_{i} \circ \Psi_{i}\circ \sigma_{i}(x,y)$. But $\Psi_{i}$ is $K$-conformal therefore there exits another generic reflection $\sigma''_i$ such $\Psi_{i}\circ \sigma_{i}=\sigma''_i\circ \Psi_i$. But $\sigma'_i\circ \sigma''_i$ is a horizontal translation therefore $\varphi_i$ and $\varphi_{i-1}$ coincide on $S_{i}$. It means that the $ \varphi_i$ glue together in a diffeomorphism of the circle that sends $X_T$ on $aX_{T'}$.
\end{proof}
\begin{rema}\label{rem_u}
If in the statement of Proposition \ref{prop_TT'} we add that $T$ and $T'$ are both modeled on the same $E_f$, then $T$ and $T'$ are isometric. It is clearly sufficient  to prove that $\widehat T$ and $\widehat T'$ are isometric. When $k=0$, it follows from the fact that $\tau=\tau'$. When $k$ is positive, it emerges from the proof above that there exists a  map $\Theta:\widehat T\rightarrow \widehat T'$ sending each $R_{f_i}$ on itself by $(x,y)\mapsto (ax+t_i, ay)$ (the compatibility in only encoded in the $t_i$'s). But $|a|$ must be $1$, as the period of a flow is invariant by conjugacy, therefore $\Theta$ is an isometry.
\end{rema}
\begin{prop}  \label{prop>K}
If two non conformally flat Lorentzian tori with Killing fields  $T$ and $T'$  are conformal then they are  $K$-conformal.
\end{prop}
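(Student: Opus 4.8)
The plan is to show that $V:=\Psi_*K$ is a constant multiple of $K'$, so that the given conformal diffeomorphism $\Psi\colon T\to T'$ is already $K$-conformal. Since $T$ is not conformally flat, $\langle K,K\rangle$ changes sign, so the flow of $K$ — and hence the flow $\Psi\circ\Phi^t_K\circ\Psi\ms$ of $V$ — is periodic; thus $V$ is a conformal vector field of $T'$ with periodic flow, and it suffices to identify such fields. (Equivalently, averaging the metric of $T'$ over the circle action generated by $V$ yields a metric in its conformal class for which $V$ is a Killing field, so one may instead compare two Killing fields of conformal metrics on the same torus; but we argue directly.)

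I would work in the ribbon charts of $E_f$. On a ribbon $R_f$ a vector field $V=a\partial_x+b\partial_y$ preserves both lightlike foliations — i.e.\ is conformal — exactly when $a=a(x)$ and $fb_y-2b_x=bf'+a'(x)f$; smoothness of $V$ across a simple zero of $f$, where the primitive $F$ of $-1/f$ blows up, then forces $b=f(y)\bigl(C-\tfrac12 a(x)\bigr)$ with $C\in\R$. The decisive step is the gluing: contiguous strips are identified through a generic reflection $(x,y)\mapsto(2F_\alpha(y)-x,y)$ whose $F_\alpha$-term really mixes the two coordinates, and carrying $V=a\partial_x+f(y)(C-\tfrac a2)\partial_y$ through it yields a field whose $\partial_x$-coefficient is the \emph{constant} $-2C$ and whose analogous constant on the far side equals $-\tfrac12\,a(2F_\alpha(y)-x)$; since the latter must be constant while $2F_\alpha(y)-x$ sweeps out a nontrivial interval (all of $\R$ for an interior strip), $a$ is constant. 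With $a$ constant the flow of $V=aK'+\Theta f(y)\partial_y$, $\Theta=C-\tfrac a2\in\R$ the same throughout by smoothness of $V$, projects onto the orbit circle $C'=T'/K'$ as the flow of $\Theta f\partial_t$, which has fixed points at the zeros of $f$ unless $\Theta=0$; periodicity forces $\Theta=0$, i.e.\ $V=aK'$. Lifting to the holonomy cover of $T'$ — a cyclic chain of $2k$ ribbons, $k$ being the conformal invariant of Proposition \ref{prop_TT'} — and propagating the constant around it gives $V=cK'$ globally.

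The main obstacle is the case $k=0$: then the holonomy cover of $T'$ is a single ribbon, there are no contiguous strips, and the rigidifying reflection argument is unavailable. Indeed, on such a torus a conformal vector field with periodic flow need not be a scalar multiple of $K'$ (reparametrisations coupled to the transverse direction occur when the twist of $T'$ is rational). In that case one must argue with the $\Z^2$-lattice generated by the period of $f$ and the twist, and/or compute the vector field induced by $V$ on the leaf space of its orbit foliation and invoke Corollary \ref{cor_ona} together with the already-established Proposition \ref{prop_TT'} to deduce $K$-conformality. I expect this $k=0$ case, and more generally the verification that periodicity of the flow genuinely rigidifies the orbit data, to be the technical heart of the argument.
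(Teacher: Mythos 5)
Your main-case strategy is genuinely different from the paper's and, for tori with $k>0$ (at least one strip of ``type 2''), it is essentially sound: the local normal form $V=a(x)\partial_x+b\partial_y$ for a foliation-preserving field, the rigidification of $a$ to a constant by pushing $V$ through a generic reflection $(x,y)\mapsto(2F_\alpha(y)-x,y)$, and the elimination of the transverse component $\Theta f(y)\partial_y$ by periodicity of the projected flow all check out (though the claim that smoothness across a simple zero of $f$ \emph{forces} $b=f(y)(C-\tfrac12 a(x))$ is asserted rather than proved: the homogeneous solutions are $b=f(y)\,h(x-2F_\alpha(y))$ for an arbitrary $h$, and you must actually argue that only constant $h$ survives the blow-up of $F_\alpha$ at the zero). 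The paper instead works entirely on the leaf spaces $\mathcal L$, $\mathcal L'$ of the two lightlike foliations of the holonomy cover and invokes Szekeres' theorem: on each half-line between fixed points of the generator $\gamma$ of the deck group, the $\R$-action is determined by the time-one map, so when both foliations are nonlinear there is only one circle action preserving them. Your computation buys an explicit description of the conformal centralizer of the foliations; the paper's argument buys uniformity and avoids the delicate smoothness analysis at the zeros of $f$.

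The genuine gap is exactly where you locate it: the case $k=0$, when one lightlike foliation is linear. There your method gives nothing (no contiguous strips, hence no reflection to rigidify $a$), and you are right that $\Psi_*K$ need \emph{not} be a constant multiple of $K'$ there, so one must replace $\Psi$ rather than analyze it. But your proposed repair does not work: Proposition \ref{prop_TT'} takes $K$-conformality as a hypothesis, so invoking it (or Corollary \ref{cor_ona}) to \emph{establish} $K$-conformality is circular, and ``argue with the $\Z^2$-lattice'' is not an argument. The missing idea, which is the actual content of the paper's proof in this case, is dynamical: on the leaf space $\mathcal L'$ of the linear foliation the $\R$-action is fixed-point free and commutes with $\gamma$, so by invariance of the rotation number (if $\mathcal L'\simeq S^1$) or by conjugacy of nontrivial translations of $\R$ one finds a diffeomorphism $\varphi$ of $\mathcal L'$ conjugating it to the reference action and commuting with $\gamma$; one then lifts $\varphi$ to a diffeomorphism $\Phi$ of the holonomy cover inducing $\varphi$ on $\mathcal L'$ and the identity on $\mathcal L$ (where Szekeres already gives uniqueness), and $\Phi$ descends to the torus and conjugates the two circle actions. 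Without some construction of this kind your proof does not cover all non conformally flat tori, so as it stands the argument is incomplete.
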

\begin{proof}We consider an $S^1$-action preserving the lightlike foliations  of $T$, denoted  $\mathcal F$ and $\mathcal F'$. 
We will prove that this action coincide, up to diffeomorphism, with  the action induced by the Killing field of $T$.
By hypothesis, at least one  of these foliations is not linear, we assume it is $\mathcal F$.
 We lift $\mathcal F$ and $\mathcal F'$ to the holonomy covering $\widehat T$  and consider their spaces of leaves $\mathcal L$ and $\mathcal L'$.  These spaces have a natural, possibly non Hausdorff (it depends on the presence of Reeb components), $1$-dimensional manifold structure (see \cite{HetR}).
We denote by $\gamma$ a generator of the cyclic group $\pi_1(T)/\pi_1(\widehat T)$. Our $S^1$-action  on $T$ preserving $\mathcal F$ and $\mathcal F'$ lifts in a $\R$-action on $\widehat T$ that preserves the lifted foliations and therefore induces  a smooth $\R$-action on $\mathcal L$ and $\mathcal L'$.  The fact that the action of $\bar 1$ (seeing $S^1$ as $\R/\Z$) on $T$ is trivial implies that the action of $1$ on $\widehat T$ and therefore on $\mathcal L$ and $\mathcal L'$ coincide with the action of $\gamma$ (or $\gamma\ms$).  Clearly, the $\R$-action on $\widehat T$ is determined by the $\R$-actions on $\mathcal L$ and $\mathcal L'$.

 The foliation  $\mathcal F$ is not linear and is invariant by a $S^1$-action, therefore it  has a proper subset of compact leaves.  The fixed points of the action of $\gamma$ on $\mathcal L$  correspond to the compact leaves of $\mathcal F$ and therefore to the  points fixed by the $\R$-action. 
  These fixed points delimit half-lines on which the action of $\gamma$ has no fixed points and therefore satisfies the hypotheses of Szekeres' Theorem (see \cite{Szek} or \cite[Theorem 4.1.11]{Nav}) which says that the $\R$-action on each of these lines, and therefore on  $\mathcal L$ by the above remark,  is determined by the action of $\gamma$.  If $\mathcal F'$ is also not linear then the same is true for the action of $\R$ on $\mathcal L'$. Hence, in this case, there exists only one  $\R$-action on $\widetilde T$  preserving the lifted foliations: the one induced by $K$.

 If $\mathcal F'$  is  linear, then, using the notations from the proof of Proposition \ref{prop>K}, $k=0$, ie  $\widehat T$ is a quotient of a ribbon. The space  $\mathcal L'$  is either diffeomorphic to $\R$ 
or to $S^1$and  the $\R$-action on it has no fixed points. 
We can assume that $\gamma$ acts on $\mathcal L'$ as a translation. The $\R$-action fixing no point there exists a diffeomorphism $\varphi$ of $\mathcal L'$ fixing a point,  conjugating the action to an action by translations and commuting with the action of $\gamma$ (if $\mathcal L'\simeq S^1$ it comes from the invariance by conjugacy  of the rotation number of $\gamma$ and if $\mathcal L'\simeq \R$ from the fact that any pair of non trivial translations are conjugated). Moreover, there exists a diffeomorphism $\Phi$ of $\widehat T$ that induces $\varphi$ on $\mathcal L'$ and the identity on $\mathcal L$. Indeed, any leaf of $\widehat {\mathcal F}$ can be seen as a covering of $\mathcal L'$. We endow each leaf of $\mathcal F$ by the lift of $\varphi$ that has fixed points. Taking coordinates on which $\widehat {\mathcal F}$ and $\widehat {\mathcal F}'$ read as  product foliations, it is clear that  the map $\Phi$ obtained is smooth. Clearly $\Phi$ commutes with $\gamma$ and therefore induces a diffeomorphism of $T$ that conjugates our $S^1$-action with the one given by $K$.
\end{proof}

Hyperbolic vector fields on the line are rather easy to compare. Indeed, it is enough to compare their zeros, see \cite{BelTrak} for a proof, more precisely:
\begin{theo}\label{theo_classR}
Let $f$ and $g$ be two smooth functions on $\R$ and let  $\dots <z_i<z_{i+1}<\dots$ and $\dots  <z'_i<z'_{i+1}<\dots, i\in \Z$ be their sets of zeros. If for any $i\in \Z$ $f'(z_i)=g'(z'_i)\neq 0$  then the vector fields $f(t)\partial_t$ and $g(t)\partial_t$  are difffeomorphic .
\end{theo}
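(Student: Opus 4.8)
The plan is to build the conjugating diffeomorphism $h:\R\to\R$ explicitly on each interval between consecutive zeros, and then check that these local pieces glue into a global $C^\infty$ diffeomorphism. First I would deal with the interval structure: by hypothesis the zeros $\{z_i\}$ and $\{z'_i\}$ are in order-preserving bijection $z_i\mapsto z'_i$, and since $f'(z_i)=g'(z'_i)\neq0$ every zero is simple, so on each open interval $]z_i,z_{i+1}[$ (and on the two unbounded ends, if the zero sets are bounded on one side) the function $f$ has constant sign, equal to the sign of $g$ on $]z'_i,z'_{i+1}[$. Inside such an interval $f\partial_t$ and $g\partial_t$ are both nowhere-vanishing vector fields, hence each is $C^\infty$-conjugate to a standard model; concretely, fixing primitives $G_i$ of $1/f$ on $]z_i,z_{i+1}[$ and $\widetilde G_i$ of $1/g$ on $]z'_i,z'_{i+1}[$ (both are diffeomorphisms onto $\R$, or onto a half-line on an unbounded end — here one uses simplicity of the endpoint zeros to see the integral diverges at each finite endpoint), the map $h_i=\widetilde G_i^{-1}\circ G_i$ sends $]z_i,z_{i+1}[$ diffeomorphically onto $]z'_i,z'_{i+1}[$ and satisfies $(h_i)_*(f\partial_t)=g\partial_t$ there.

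\textbf{Gluing at the zeros.} The genuine issue is $C^\infty$-regularity of the assembled map $h$ at the zeros $z_i$. Here the hypothesis $f'(z_i)=g'(z'_i)$ is exactly what is needed, via the linearization theorem already invoked in the proof of Proposition \ref{prop_ext} (\cite[Theorem 2.16]{BelTrak}): in a neighborhood of $z_i$ the hyperbolic vector field $f\partial_t$ is $C^\infty$-conjugate to its linear part $\lambda_i(t-z_i)\partial_t$ with $\lambda_i=f'(z_i)$, and similarly $g\partial_t$ near $z'_i$ is conjugate to $\lambda_i(t-z'_i)\partial_t$ with the \emph{same} $\lambda_i$. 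Composing these two local linearizations gives a $C^\infty$ local conjugacy $k_i$ from a neighborhood of $z_i$ to a neighborhood of $z'_i$ with $k_i(z_i)=z'_i$ and $(k_i)_*(f\partial_t)=g\partial_t$. Now on the punctured neighborhood $k_i$ and the interval maps $h_{i-1}$, $h_i$ are two conjugacies between the same nowhere-zero vector fields on a connected interval, so they differ by the time-$c$ flow of $g\partial_t$ for a locally constant — hence constant — shift parameter $c$; absorbing this shift (compose $k_i$ with the appropriate element of the flow of $f\partial_t$, which is $C^\infty$ and fixes $z_i$) makes $k_i$ agree with $h_{i-1}$ on the left and $h_i$ on the right. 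Doing this consistently across all zeros — proceeding interval by interval and at each new zero adjusting only the still-free piece — yields a single map $h$ that is $C^\infty$ on each interval, $C^\infty$ near each zero, and hence $C^\infty$ everywhere, monotone increasing, proper, thus a diffeomorphism of $\R$ with $h_*(f\partial_t)=g\partial_t$.

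\textbf{The main obstacle.} The one point requiring care, and the place I expect to spend the most effort, is the simultaneous consistency of the shift adjustments: fixing the additive constant at one zero determines the interval map on the adjacent interval, which in turn constrains what can be done at the next zero, so one must verify there is never an obstruction — i.e.\ that the shift forced at $z_{i+1}$ by the already-fixed piece on $]z_i,z_{i+1}[$ can still be realized by an element of the flow (it can, since the flow acts by arbitrary time on a nowhere-zero vector field). Once the bookkeeping is set up (say, normalize by declaring $h$ on one chosen interval to be the canonical $\widetilde G^{-1}\circ G$ and propagate), everything else is routine; the smoothness at the zeros is entirely taken care of by Belitskii's linearization theorem together with the equality of the derivatives $f'(z_i)=g'(z'_i)$, which is the hypothesis of the statement.
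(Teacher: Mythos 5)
The paper does not actually prove this theorem: it is quoted with a pointer to \cite{BelTrak}, so there is no in-paper argument to compare yours against. Your proof is correct, and it is the standard argument that the reference encodes: straighten $f\partial_t$ and $g\partial_t$ on each interval between consecutive zeros via primitives of $1/f$ and $1/g$ (simplicity of the endpoint zeros forcing both primitives to be diffeomorphisms onto $\R$), linearize at each zero by the Sternberg--Belitskii theorem using the hypothesis $f'(z_i)=g'(z'_i)$, and propagate the flow-shift normalizations along the line. Two remarks. First, the sentence ``absorbing this shift \dots makes $k_i$ agree with $h_{i-1}$ on the left and $h_i$ on the right'' is loose as written: a single composition $k_i\circ\Phi^c_f$ shifts both components of the punctured neighborhood by the \emph{same} time $c$, so you cannot match two independently prescribed maps $h_{i-1}$ and $h_i$ simultaneously. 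What saves you is exactly what your final paragraph does: match $k_i$ to the already-fixed side only, and then \emph{define} the map on the other adjacent interval as the continuation of $k_i$ (legitimate because a conjugacy on an interval where the field does not vanish is determined by its germ at one end and extends globally via $\widetilde G_i^{-1}(G_i(\cdot)+\mathrm{const})$). Since the complementary intervals form a chain indexed by $\Z$ rather than a cycle, this propagation never meets an obstruction --- which is precisely why no global invariant appears here, in contrast with the circle case of Theorem \ref{HB}, where the invariant $\mu$ is the holonomy of this same propagation. Second, your parenthetical about unbounded zero-free ends should be dropped: with the zeros indexed by $\Z$ and all simple, the zero set is discrete and unbounded in both directions, so every complementary interval is bounded; on a genuinely unbounded zero-free end the argument (and indeed the statement) would fail, since $\int^{+\infty}1/f$ and $\int^{+\infty}1/g$ need not be simultaneously convergent, and completeness of the flow is a conjugacy invariant.
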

\begin{defi}\begin{itemize}
\item
We will say that a function  $f:\R\rightarrow \R$  satisfies Mehidi's condition if there exists $\lambda>0$ such that for any $z\in \R$, $f(z)=0$ implies $|f'(z)|=\lambda$. By extension we will say that a Lorentzian torus with a Killing field satifies Mehidi's condition if it is locally modeled on $E_f$ and $f$ satisfies Mehidi's condition.
\item
A Lorentzian torus is Reeb if its lightlike foliations are unions of Reeb components.
\end{itemize}
\end{defi}
\begin{theo}[Mehidi \cite{LM}]\label{thM}
Let $T$ be a non flat Lorentzian torus  locally modeled on $E_f$. If $T$ has no conjugate points and if the zeros of $f$ are simple (or equivalently if the closed lightlike geodesic are not complete)  then  $T$ is Reeb and $f$ satisfies Mehidi's condition. Moreover the reciprocal is true for $f$ close enough from  $4\sin$.
\end{theo}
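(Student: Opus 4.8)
Since conjugate points are a metric and not a conformal invariant, the structural results of the previous sections enter only through the description of $T$: every geodesic of $T$ either meets a saddle point or lifts to a geodesic of some ribbon $R_f$, so the plan is to work in ribbon coordinates, where the metric is $f(y)\,dx^2+2\,dx\,dy$ and $K=\partial_x$, and to decide when a Jacobi field along a geodesic can vanish at two distinct parameters.

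The first step is to integrate the geodesic flow using the Killing field. Along a geodesic $\gamma(s)=(x(s),y(s))$ the quantities $c=\langle\gamma',K\rangle=f(y)\dot x+\dot y$ and $e=\langle\gamma',\gamma'\rangle=f(y)\dot x^2+2\dot x\dot y$ are constant, and eliminating $\dot x$ gives the separated system
\[
\dot y^2=c^2-e\,f(y),\qquad \dot x=\frac{c-\dot y}{f(y)}.
\]
Hence the flow is completely integrable: on $\{e\neq0\}$ the coordinate $y$ oscillates in the potential $ef$ at level $c^2$, and to each admissible pair $(c,e)$ one attaches a period function and a shift $\Theta(c,e)$ recording the increment of $x$ over one period of $y$, both of which are incomplete elliptic-type integrals of $1/\sqrt{c^2-ef}$ and $f(y)^{-1}/\sqrt{c^2-ef}$. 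The null geodesics may be set aside: in dimension two no null geodesic carries conjugate points, and the only closed ones are the lines $\{y=z\}$ over the zeros $z$ of $f$, whose affine parameter blows up like $e^{f'(z)x/2}$, so that they are incomplete exactly when $z$ is simple --- this is the parenthetical equivalence in the statement.

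The core of the argument is to translate ``no conjugate points'' into a property of these integrals. As in the classical study of Riemannian surfaces of revolution, a variation of $\gamma$ through geodesics with the same $e$ and nearby Clairaut constant is a Jacobi field whose vanishing is governed by how $\Theta$ and the period function vary with $c$, uniformly as $c$ ranges over the admissible interval for each causal type. The endpoints of that interval are reached exactly when a turning point of $y$ collides with a zero $z$ of $f$ --- geometrically, when the geodesic spirals onto one of the closed lightlike geodesics --- and there the period integral diverges logarithmically with a coefficient proportional to $1/|f'(z)|$. Demanding absence of conjugate points simultaneously in all these families forces the contributions of the different zeros to match, which pins $|f'(z)|$ to a common value, i.e.\ Mehidi's condition; the same matching, together with the classification of the possible models of a torus with Killing field from \cite{BetM}, rules out every non-Reeb configuration (in particular the case where the universal cover of $T$ is a single ribbon), so $T$ must be Reeb. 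For the reciprocal near $4\sin$, one first evaluates the above integrals for $f=4\sin$ to see that the corresponding Reeb torus has no conjugate points, and then observes that the inequalities on $\Theta$ and on the period function expressing the absence of conjugate points are open within the family of functions satisfying Mehidi's condition, hence persist under a small perturbation.

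The main obstacle is the passage from ``no conjugate points for every geodesic of the compact $T$'' --- a statement involving spacelike, timelike and near-lightlike geodesics on an equal footing --- to a single sharp analytic condition on $\Theta$ and the period function, and especially the control of the degenerate regime in which a turning point of $y$ approaches a zero of $f$: there the period function blows up, the oscillation of the Jacobi field has to be estimated uniformly, and it is exactly this estimate that must be made quantitative enough to yield the equality $|f'(z)|=\mathrm{const}$ rather than a purely qualitative conclusion. Getting that estimate sharp, and then organising the resulting constraints over all the zeros of $f$, is where the genuine work of \cite{LM} lies.
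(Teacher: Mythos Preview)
This theorem is not proved in the paper: it is quoted verbatim from Mehidi \cite{LM} and used as a black box, so there is no ``paper's own proof'' to compare your attempt against. The paper's contribution is the conformal interpretation of Mehidi's result (Theorem~\ref{cordestores} and its corollaries), not a new proof of Theorem~\ref{thM} itself.

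As for the content of your sketch: it is a plausible high-level outline of the kind of analysis carried out in \cite{LM} --- integrating the geodesic flow in ribbon coordinates via the Clairaut-type invariant, studying Jacobi fields through the period and shift functions, and tracking the logarithmic divergence at the zeros of $f$. But, as you yourself say in your last paragraph, the genuine work (turning the qualitative blow-up into the sharp equality $|f'(z)|=\lambda$, handling the near-lightlike regime uniformly, and ruling out the non-Reeb models) is entirely deferred to \cite{LM}. So what you have written is an honest roadmap rather than a proof, and it would be inappropriate to present it as one; the correct thing to do here is exactly what the paper does --- state the theorem with the attribution and cite \cite{LM} for the proof.
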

The case $f(t)=4\sin(t)$ corresponds to the Clifton-Pohl tori, ie tori having the same universal cover as $T_{CP}:=\big (\R^2\smallsetminus \{0\},{\displaystyle \frac{2dxdy}{x^2+y^2}}\big)/((x,y)\sim 2(x,y))$ (see \cite[Exemple 3.27]{BetM}). It was proven in  \cite{BetM1} that  Clifton-Pohl tori do not have conjugate points. 
\begin{cor}
A torus with a Killing field  is Reeb and satisfies  Mehidi's condition  if and only if its  universal cover is $K$-conformal to the universal cover of the Clifton-Pohl torus. 
\end{cor}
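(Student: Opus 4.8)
The plan is to reduce the statement to two independent facts — one analytic, one combinatorial/geometric. First note that a Reeb torus is never conformally flat, so any torus $T$ as in the statement is locally modeled on some $E_f$ with $f$ periodic of non constant sign. I claim the corollary is equivalent to the conjunction of: (a) $f\partial_t$ is diffeomorphic to $a\,(4\sin)\partial_t$ for some $a\in\R^{*}$; and (b) $T$ is Reeb. By Theorem~\ref{theo_conf}, condition (a) is precisely the existence of a $K$-conformal diffeomorphism $\Phi\colon E_f\to E_{4\sin}$ conjugating the generic reflections. So it suffices to prove that ``$f$ satisfies Mehidi's condition $\iff$ (a)'' and that ``(a) together with (b) $\iff$ $\widetilde T$ is $K$-conformal to $\widetilde{T_{CP}}$''.

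\emph{The analytic fact.} If $f$ is Mehidi with constant $\lambda$ then every zero of $f$ is simple, and, $f$ being periodic with a sign change, its zeros form a bi-infinite sequence $\dots<z_i<z_{i+1}<\dots$ along which the sign of $f$ alternates, so $f'(z_i)=\varepsilon(-1)^i\lambda$ for a fixed $\varepsilon=\pm1$. As $4\sin$ has zeros $i\pi$ with $(4\sin)'(i\pi)=4(-1)^i$, Theorem~\ref{theo_classR} gives that $f\partial_t$ is diffeomorphic to $\varepsilon\tfrac\lambda4(4\sin)\partial_t$, which is (a). Conversely, if $\varphi_{*}(f\partial_t)=a\,(4\sin)\partial_t$ then $f(t)\varphi'(t)=4a\sin(\varphi(t))$, and differentiating at a zero $z$ of $f$ gives $f'(z)=4a\cos(\varphi(z))$ with $\varphi(z)\in\pi\Z$; hence every zero of $f$ is simple with $|f'|=4|a|>0$, i.e.\ $f$ is Mehidi. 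Combined with Theorem~\ref{theo_conf}, this is the first equivalence.

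\emph{The geometric fact and its converse.} Here I use the structure theory of~\cite{BetM}. Since a Killing field of a Lorentzian torus vanishes nowhere, the developing map of the universal cover of a Reeb torus locally modeled on $E_f$ realizes it isometrically as a canonical simply connected subset $U_f\subset E_f$ — the ``linear chain'' of ribbons obtained by unwrapping the saddle points — which does not depend on the continuous moduli of the Reeb torus (period, length of the orbits, twist) and is determined up to $\mathrm{Is}(E_f)$; since $T_{CP}$ is a Reeb torus locally modeled on $E_{4\sin}$ (the case $b=0$), the subset $U_{4\sin}$ is, up to $K$-conformal diffeomorphism, $\widetilde{T_{CP}}$. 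Now $\Phi$ as above conjugates the generic reflections, hence the entire combinatorics of strips, dominos and saddle points, so $\Phi(U_f)=U_{4\sin}$; thus (a) and (b) force $\widetilde T$ to be $K$-conformal to $U_f$, hence, via $\Phi$, to $U_{4\sin}=\widetilde{T_{CP}}$. For the converse, given a $K$-conformal diffeomorphism $\widetilde T\to\widetilde{T_{CP}}$, I would compose it with developing maps of $\widetilde T$ into $E_f$ and of $\widetilde{T_{CP}}$ into $E_{4\sin}$; restricting to a ribbon and applying Proposition~\ref{prop_rib} (using that $f$ and $4\sin$ are periodic and that the orbit space of the Killing field of $\widetilde T$ is all of $\R$) one obtains a diffeomorphism of $f\partial_t$ onto a multiple of $(4\sin)\partial_t$, so (a) holds; and since the developing data of $\widetilde T$ is, by construction, that of the linear chain $U_{4\sin}$, the lightlike foliations of $T$ are unions of Reeb components, i.e.\ (b) holds.

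\emph{Main obstacle.} The delicate point is the geometric fact: to pin down the developing image of the universal cover of a Reeb torus as a canonical subset $U_f\subset E_f$ and to check that ``being Reeb'' is exactly the condition under which this image, after the $K$-conformal identification of Theorem~\ref{theo_conf}, coincides with $\widetilde{T_{CP}}$. This requires a careful reading of the combinatorial description of $E_f$ in~\cite{BetM} — the group $G$, the location of the saddle points, and the characterization of Reeb tori as those whose lightlike foliations ``unwrap'' to this linear chain. The analytic fact and the invocations of Theorems~\ref{theo_conf} and~\ref{theo_classR} are, by comparison, routine.
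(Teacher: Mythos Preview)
Your proposal is essentially correct and follows the same route as the paper: Theorem~\ref{theo_classR} for the analytic equivalence ``Mehidi $\iff f\partial_t\sim a(4\sin)\partial_t$'', Theorem~\ref{theo_conf} to promote this to a $K$-conformal map $E_f\to E_{4\sin}$, and the structure theory of \cite{BetM} to identify the image of $\widetilde T$ inside $E_f$ with that of $\widetilde{T_{CP}}$ inside $E_{4\sin}$. The paper makes the geometric step precise via \cite[Th\'eor\`eme~3.25, Proposition~4.35, Lemme~5.17]{BetM}: $\widetilde T$ is the $K_f$-saturation of a maximal broken lightlike geodesic with exactly one breaking point per strip, and this condition characterizes it up to isometry of $E_f$ and is preserved by $\Phi$ --- this is exactly the ``linear chain $U_f$'' you describe.

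One small point on your converse: your phrase ``restricting to a ribbon'' is not quite right, since $\widetilde T$ (being Reeb) does not contain a full copy of $R_f$ --- the maximal ribbons of $\widetilde T$ are dominos. The paper sidesteps this by working directly on a domino: the $K$-conformal map writes the metric there as $\zeta(y)(4\sin(y)\,dx^2+2\,dxdy)$, and then $f=4(\zeta\circ Z)(\sin\circ Z)$ with $Z'=1/\zeta$, from which $|f'(z)|=4$ at every zero is immediate. Your route via Proposition~\ref{prop_rib} can be made to work (apply it domino by domino and observe the resulting $\varphi_i$'s glue across the generic reflections, as in Proposition~\ref{prop_TT'}), but this needs to be said; the domino computation is shorter. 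For the Reeb conclusion in the converse, the paper again appeals to \cite[Lemme~5.17]{BetM} rather than to the abstract ``developing data'' formulation you use.
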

\begin{proof}
Let $T$ be a Reeb  torus locally modeled on $E_f$ satisfying Mehidi's condition. Replacing possibly $f$ by another element of $[[f]]$, we can assume that $|f'( z)|=4$ for any $z$  zero of $f$ (as it is possible to replace $f(y)$ by $a^2f(y/a)$). It follow from Theorem \ref{theo_classR} that $f(t)\partial_t$ and  $4\sin(t)\partial_t$ are diffeomorphic and therefore $E_f$ is $K$-conformal to $E_{4\sin}$ (the extension containing  the universal cover of the Clifton-Pohl torus). 

According to \cite[Théorème 3.25 and Proposition 4.35]{BetM}, $\widetilde T$,  the universal cover of  $T$, is isometric to the saturation under the flow of $K_f$ of a maximal broken lightlike geodesic $\gamma$ of $E_f$ that has at most one breaking point per strip  and never cuts  two distinct separatrices of a saddle point (or equivalently that never cuts twice an integral curve of $K_f$). It follows from  \cite[Lemme 5.17]{BetM} that if the lightlike foliations of $T$ are unions of Reeb components then~$\gamma$ has one breaking point on each strip. This condition characterizes $\widetilde T$ up to isometry of $E_f$ and of course it also  characterizes $\widetilde T_{CP}$, the universal cover of $T_{CP}$, inside $E_{4\sin}$. Therefore the $K$-conformal map between $E_{4\sin}$ to $E_f$ sends any copy of $\widetilde T_{CP}$ on a copy of $\widetilde T$. 

Reciprocally, if $\widetilde T$ is $K$-conformal to $\widetilde T_{CP}$ then \cite[Lemme 5.17]{BetM} tells us that $T$ is also Reeb. Moreover, on any domino of $\widetilde T$ the metric reads $\zeta(y)(4\sin(y)dx^2+2dxdy)$ for some positive function $\zeta$.  The function $f$ is then $4(\zeta\circ Z)(\sin\circ Z)$ where $Z$ is a solution of $y'=\frac 1{\zeta(y)}$ and at $z$ any zero of $f$ we have $|f'(0)=4|$. Thus, $T$ satisfies Mehidi's condition.
\end{proof}
To classify hyperbolic vector fields on the circle, another invariant is needed. For Lorentzian tori it means that being $K$-conformal or having $K$-conformal universal cover are two different things. 
\begin{defi}[see \cite{Hitch} and \cite{Byk}]
Let $X$ be a hyperbolic  vector field on the circle $\R/P\Z$ ie a vector field  induced by a  vector field of the line $f(t)\partial_t$,   $f$ being $P$-periodic and  having only  simple zeros. Let $z_1<\dots < z_n$ be the zeros of $f$ in $[0,P[$. We define (with $z_{n+1}=z_1+P$)
$$\mu(X)=\lim_{\varepsilon\rightarrow 0}\sum_{i=1}^n\int_{z_i+\varepsilon}^{z_{i+1}-\varepsilon}\frac 1f \in \R $$
\end{defi}
\begin{defi}
Let $X_{f,P}$  be the hyperbolic vector field on $\R/P\Z$ induced by a $P$-periodic map $f$. The list of invariants of $X_{f,P}$ is the following
\begin{itemize}
\item the number $n_f$ of zeros of $X_{f,P}$ (ie of $f$ on $[0,P[$),
\item the values of the  $\lambda_{f,i}:=f'(z_i)$, where $z_i$ is ith zero of $f$,
\item the global invariant $\mu(X_{f,P})$ (it is indeed invariant under circle diffeomorphism).
\end{itemize}
\end{defi}
\begin{theo}[Hyperbolic vector fields on the circle -- Hitchin \cite{Hitch}, Bykov \cite{Byk}--]\label{HB}
Two hyperbolic vector fields on $S^1$ are diffeomorphic if and only if the have the same list of invariants. 
\end{theo}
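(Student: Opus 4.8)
The plan is to treat the two implications separately, after the obvious normalisation: relabelling the zeros cyclically one may assume $n_f=n_g=:n$, that the $i$-th zero $z_i$ of $f$ matches the $i$-th zero $z_i'$ of $g$ with common multiplier $\lambda_i:=f'(z_i)=g'(z_i')$, and --- for the converse --- that $\mu(X_{f,P})=\mu(X_{g,Q})$. Since $f$ and $g$ change sign at each simple zero, the $\lambda_i$ alternate in sign and $n$ is even; I will use this without further comment.

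\emph{Necessity.} Lift a conjugacy to a diffeomorphism $\varphi$ of the line, so that $\varphi_*(f\partial_t)=g\partial_t$, i.e.\ $g\circ\varphi=f\cdot\varphi'$. The number of zeros is manifestly preserved, and differentiating this identity at $z_i$ while using $f(z_i)=0$ gives $g'(z_i')=f'(z_i)$. For $\mu$ I would change variables $s=\varphi(t)$, turning $\int ds/g(s)$ into $\int dt/f(t)$, and then check that the $\varepsilon$-regularisation survives the rescaling of cut-offs induced by $\varphi$. Near $z_i$ the integrand is $\sim\frac1{\lambda_i}(t-z_i)\ms$, so replacing a cut-off $\varepsilon$ by $c\varepsilon$ at $z_i$ changes the truncated integral by $\pm\frac1{\lambda_i}\log c+o(1)$, the sign depending on whether $z_i$ is the left or the right end of the interval; hence in the cyclic sum defining $\mu$ all these corrections cancel, their total being $\big(\sum_i\tfrac1{\lambda_i}-\tfrac1{\lambda_{i+1}}\big)\log c=0$. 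Since $\varphi$ multiplies the cut-off at $z_i$ by $\varphi'(z_i)\ms+o(1)$ (the $O(\varepsilon^2)$ error in the cut-off being irrelevant), the value of $\mu$ is unchanged, so $\mu(X_{g,Q})=\mu(X_{f,P})$.

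\emph{Sufficiency.} Here the plan is a cut-and-paste construction. By the linearisation theorem for hyperbolic vector fields (\cite[Theorem 2.16]{BelTrak}) one can choose, for each $i$, coordinates near $z_i$ and near $z_i'$ in which $f\partial_t$ and $g\partial_t$ both read $\lambda_i x\partial_x$; these are unique up to a scaling $x\mapsto e^{\theta_i}x$, so they furnish a one-parameter family of conjugacy germs at $z_i$. On each open interval $(z_i,z_{i+1})$ a primitive $F_i$ of $1/f$ is a diffeomorphism onto $\R$ conjugating $f\partial_t$ to $\partial_u$, and likewise a primitive $G_i$ of $1/g$ on the corresponding interval of $g$; since the centraliser of $\partial_u$ consists of translations, every conjugacy over the open interval has the form $G_i\ms\circ(u\mapsto u+s_i)\circ F_i$ for a unique $s_i\in\R$. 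A diffeomorphism $\R/P\Z\to\R/Q\Z$ conjugating $X_{f,P}$ to $X_{g,Q}$ is then exactly the datum of $\theta=(\theta_i)$ and of the $s_i$ such that on each interval $G_i\ms\circ(+s_i)\circ F_i$ matches the chosen germ at $z_i$ and the one at $z_{i+1}$ (smoothness at the zeros being automatic, since there one works in linearising charts). Using the expansions $F_i\sim\frac1{\lambda_i}\log|x|$ near $z_i$ and $F_i\sim\frac1{\lambda_{i+1}}\log|x|$ near $z_{i+1}$, and their analogues for $G_i$, the two matching conditions on interval $i$ become $s_i=\theta_i/\lambda_i+\gamma_i$ and $s_i=\theta_{i+1}/\lambda_{i+1}+\gamma_i'$, where $\gamma_i,\gamma_i'$ are constants comparing the primitives of $1/f$ and $1/g$ near the two ends. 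Eliminating the $s_i$ leaves the cyclic linear system
\[
\frac{\theta_i}{\lambda_i}-\frac{\theta_{i+1}}{\lambda_{i+1}}=\gamma_i'-\gamma_i\qquad(i\in\Z/n\Z),
\]
whose left-hand side telescopes to $0$ upon summation; so it is solvable if and only if $\sum_i(\gamma_i'-\gamma_i)=0$. The last step is to evaluate this sum: expressing each $\gamma_i,\gamma_i'$ through the $\varepsilon$-renormalised integral of $1/f$, resp.\ $1/g$, over interval $i$ --- the logarithmic parts at each endpoint carrying the common coefficient $\frac1{\lambda_i}$, so that the derivatives of the linearising charts at the zeros cancel in pairs around the circle --- one obtains $\sum_i(\gamma_i'-\gamma_i)=\mu(X_{g,Q})-\mu(X_{f,P})$, which vanishes by hypothesis. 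Solving the system for $\theta$ and gluing the linearised pieces to the interval conjugacies then yields the desired diffeomorphism.

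I expect the main obstacle to be exactly this last identification: calibrating the linearising charts at the zeros against the $\int 1/f$-charts on the intervals and verifying that the telescoping obstruction $\sum_i(\gamma_i'-\gamma_i)$ equals $\mu(X_{g,Q})-\mu(X_{f,P})$ --- in particular that every auxiliary normalisation (base points of the $F_i$ and $G_i$, the linearising scales on the $g$-side) washes out, and that the alternating signs of the $\lambda_i$, equivalently of $f$ along consecutive intervals, are tracked consistently in the regularisations. The remaining ingredients --- the linearisation theorem, the description of conjugacies over a single interval, the solvability of a telescoping cyclic system, and the smoothness of the glued map --- are routine.
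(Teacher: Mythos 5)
The paper gives no proof of this statement: Theorem \ref{HB} is quoted from Hitchin and Bykov, so there is nothing internal to compare against. Your argument is correct and is essentially the standard proof from those sources: necessity via $g\circ\varphi=f\cdot\varphi'$ (preservation of $n$ and of the $\lambda_i$, plus the per-zero cancellation of the $\pm\frac1{\lambda_i}\log\varphi'(z_i)$ cut-off corrections in the cyclic sum, which indeed shows $\mu$ is well defined and invariant), and sufficiency by linearising at the zeros, rectifying on the complementary intervals, and observing that the resulting cyclic system in the scalings $\theta_i$ telescopes, with obstruction $\sum_i(\gamma_i'-\gamma_i)=\mu(X_{g,Q})-\mu(X_{f,P})$ --- an identity that does check out, since on each interval the regularised constants $\gamma_i,\gamma_i'$ differ from the truncated integrals of $1/f$ and $1/g$ by terms $\frac1{\lambda_j}\log$ of the linearising derivatives, which cancel in pairs around the circle. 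The only points you leave implicit (orientation conventions, the degenerate case of a nonvanishing field, and that $F_i$ maps each interval onto all of $\R$ because $1/f$ is non-integrable at a simple zero) are genuinely routine.
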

Let  $b\in  ]-1,1[$, and  $f_b:\R\rightarrow \R,b(y)\mapsto  \sin(y)(1+ b \sin(y))$. It is proven in $\cite{Byk}$ that when $b$ runs through $]-1,1[$ then $\mu(X_{f_b,2\pi})$ (and therefore $\mu(X_{f_b,2k\pi})$, for any $k\in \Z^*$) runs through all $\R$.  It follows that if $f$ is $P$-periodic and satisfies Mehidi's condition then there exists $b\in ]-1,1[$ and $k\in \Z^*$ such that $X_{f,P}$ is diffeomorphic to $\lambda X_{f_b,2k\pi}$. Hence,  Theorem  \ref{cordestores} implies the following corollary (uniqueness of the torus actually follows from Remark~\ref{rem_u}).
\begin{cor} If $T$ is a torus with a Killing field satisfying Mehidi's condition, then there exists a unique $b\in ]-1,1[$ and a unique torus locally modeled on $E_{f_b}$ and  $K$-conformal to~$T$.
\end{cor}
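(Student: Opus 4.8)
The plan is to combine Theorem~\ref{cordestores} (via its corollary-like consequences) with the classification of hyperbolic vector fields on the circle, Theorem~\ref{HB}, together with Bykov's surjectivity statement for the global invariant $\mu$. First I would reduce to finding, for a torus $T$ with a Killing field satisfying Mehidi's condition, a suitable pair $(b,k)\in{]-1,1[}\times\Z^*$ so that the associated circle vector field $X_T$ on $\R/P_\Sigma\Z$ is diffeomorphic to a multiple of $X_{f_b,2k\pi}$. By definition, $T$ is locally modeled on some $E_f$ with $f$ satisfying Mehidi's condition, so all zeros of $f$ are simple and $|f'(z)|=\lambda$ for a fixed $\lambda>0$ at every zero $z$. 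Rescaling $f$ within its class $[[f]]$ (replacing $f(y)$ by $a^2f(y/a)$, which leaves $E_f$ unchanged up to $K$-conformal diffeomorphism) I may normalize so that $|f'(z)|=1$ at every zero; the normalization does not affect which $E_{f_b}$ will eventually appear since Theorem~\ref{cordestores} only asks for $X_{f,P}$ to be diffeomorphic to \emph{some} nonzero multiple $aX_{g,Q}$.

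Next I would pin down the list of invariants of $X_T=X_{f,P_\Sigma}$ in the sense of Theorem~\ref{HB}: the number $n_f$ of zeros in a period, the derivatives $\lambda_{f,i}=f'(z_i)$ (all equal to $\pm1$ after normalization, alternating in sign since $f$ changes sign at each simple zero, hence $n_f$ is even), and the global invariant $\mu(X_{f,P_\Sigma})\in\R$. The functions $f_b(y)=\sin(y)(1+b\sin(y))$ have exactly two zeros in $[0,2\pi[$, namely $0$ and $\pi$, with $f_b'(0)=1+0=1$ and $f_b'(\pi)=-1$; so $X_{f_b,2k\pi}$ has $2k$ zeros with derivatives alternating $\pm1$. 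Choosing $k=n_f/2$ matches the number of zeros and (after the normalization) the derivatives $\lambda_i$ exactly. It remains to match the single remaining invariant $\mu$: by Bykov's result quoted in the excerpt, as $b$ ranges over $]-1,1[$ the value $\mu(X_{f_b,2\pi})$ ranges over all of $\R$, and rescaling the period by $k$ multiplies $\mu$ by $k$ (or an elementary argument shows $\mu(X_{f_b,2k\pi})$ still sweeps all of $\R$), so there is $b\in{]-1,1[}$ with $\mu(X_{f_b,2k\pi})=\mu(X_{f,P_\Sigma})$. By Theorem~\ref{HB} the two hyperbolic circle vector fields are then diffeomorphic, in particular $X_{f,P_\Sigma}$ is diffeomorphic to a (here trivial, $a=1$) multiple of $X_{f_b,2k\pi}$.

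With this matching in hand, Theorem~\ref{cordestores} (with $f$ the function for $T$, $g=f_b$, $P=P_\Sigma$, $Q=2k\pi$) produces a torus $T'$ locally modeled on $E_{f_b}$ that is conformal to a finite cover of $T$; but since $X_T$ is genuinely diffeomorphic to $X_{f_b,2k\pi}$ on the \emph{right} period and not merely on a cover, one gets directly a torus locally modeled on $E_{f_b}$ in the conformal (indeed $K$-conformal) class of $T$ itself. For uniqueness of the torus I would invoke Remark~\ref{rem_u}: two conformal tori both locally modeled on the same $E_{f_b}$ are isometric, so the torus locally modeled on $E_{f_b}$ and $K$-conformal to $T$ is unique. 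For uniqueness of $b$, suppose $T$ were $K$-conformal to tori locally modeled on both $E_{f_b}$ and $E_{f_{b'}}$; then by Proposition~\ref{prop_TT'} the corresponding circle vector fields $X_{f_b,2k\pi}$ and $X_{f_{b'},2k'\pi}$ are diffeomorphic up to a scalar, hence have proportional lists of invariants; comparing $\mu$ (and noting the scalar is forced to be $1$ because the number and derivatives of zeros must match and $|f_b'(z)|=|f_{b'}'(z')|=1$) forces $\mu(X_{f_b,2k\pi})=\mu(X_{f_{b'},2k'\pi})$, and by Bykov's injectivity of $b\mapsto\mu$ on $]-1,1[$ (for fixed multiplicity $k$) we conclude $b=b'$. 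The main obstacle I anticipate is the bookkeeping around the multiple $\lambda$ and the period multiplicity $k$: one must be careful that the normalization of $f$, the choice of $k=n_f/2$, and Bykov's surjectivity statement are compatible, i.e. that after fixing the number of zeros one still has the full range of $\mu$ available — this is exactly where the quoted consequence of \cite{Byk} (that $\mu(X_{f_b,2k\pi})$ runs through all of $\R$ for every fixed $k$) is doing the real work.
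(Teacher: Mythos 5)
Your argument follows the paper's route exactly: normalize the common derivative $\lambda$ at the zeros, match the list of invariants (number of zeros, derivatives, then $\mu$ via Bykov's surjectivity) to some $X_{f_b,2k\pi}$, invoke Theorem \ref{HB} and Theorem \ref{cordestores} for existence, and Remark \ref{rem_u} for uniqueness of the torus. The computation $f_b'(0)=1$, $f_b'(\pi)=-1$, the choice $k=n_f/2$, and the observation that $\mu(X_{f_b,2k\pi})=k\,\mu(X_{f_b,2\pi})$ still sweeps $\R$ are all correct and are precisely what the paper's (very terse) proof relies on. Your remark that taking $P=P_\Sigma$ should let one avoid the passage to a finite cover is the right idea, though you assert it rather than derive it from Theorem \ref{cordestores} or Corollary \ref{cor_ona}, which as stated only produce a torus conformal to a finite cover; the paper is equally silent on this upgrade.

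There is, however, a genuine gap in your uniqueness-of-$b$ argument. From matching the derivatives at the zeros you can only conclude that the scalar $a$ in Proposition \ref{prop_TT'} satisfies $|a|=1$, not $a=1$. The case $a=-1$ is not vacuous: since $-f_{-b}(y+\pi)=f_b(y)$, the translation $y\mapsto y+\pi$ shows that $X_{f_b,2k\pi}$ is diffeomorphic to $-X_{f_{-b},2k\pi}$; equivalently, the orientation-reversing map $y\mapsto -y$ carries $X_{f_b,2k\pi}$ to $X_{f_{-b},2k\pi}$, and $\mu$ changes sign under orientation-reversing diffeomorphisms (indeed $\mu(X_{f_b,2\pi})=-2\pi b/\sqrt{1-b^2}$). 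So the conclusion ``$\mu(X_{f_b,2k\pi})=\mu(X_{f_{b'},2k'\pi})$'' does not follow from what you have established: a priori the conjugacy could exchange $b$ and $-b$. To close this you must either argue that the circle diffeomorphism produced by Proposition \ref{prop_TT'} can be taken orientation-preserving with $a>0$ (e.g.\ by fixing orientations of the leaf space and of the Killing field consistently on both tori), or otherwise rule out the sign flip. The paper does not address this point either, but since you present ``the scalar is forced to be $1$'' as a justified step, it needs an actual argument.
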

\begin{examples}
\begin{itemize}
\item
The quadratic variations of Clifton-Pohl tori are the tori covered by  $(\R^2\smallsetminus\{0\},\frac{2dxdy}{Q(x,y)})$ where $Q$ is a positive definite quadratic form. According to  \cite[Theorem 4.20]{LM} they do not have conjugate points. They  are clearly all $K$-conformal to a Clifton-Pohl tori (no need to compute  $\mu$).
\item It follows directly form \cite[Theorem 4.20]{LM} that if $|b|<1/8$ (because of the property $f'f'''\leq 0$) then  Reeb tori localy modeled on $E_{f_b}$ do not have conjugate points. But they are in the $K$-conformal class of a Clifton-Pohl torus if and only if  $b=0$. It is not clear to the author, if there exists tori without conjugate points in the $K$-conformal classes given by $|b|\geq 1/8$.
\end{itemize}
\end{examples}

\bigskip
\begin{tabular}{ll}
 Address: & Univ. Bordeaux, IMB, UMR 5251, F-33400 Talence, France\\
& CNRS, IMB, UMR 5251, F-33400 Talence, France\\
\\
E-mail:&{\tt pierre.mounoud@math.u-bordeaux.fr}
\end{tabular}

\end{document}